\documentclass[a4paper,11pt]{amsart}
\usepackage{hyperref}
\usepackage[utf8]{inputenc}
\usepackage[all]{xy}
\usepackage{graphicx}
\usepackage{amssymb}
\usepackage{amsmath}
\usepackage{bbm}
\usepackage{leftidx}
\usepackage{yfonts}
\usepackage{mathtools}
\newtheorem{theorem}{Theorem}
\newtheorem{proposition}{Proposition}
\newtheorem{lemma}{Lemma}
\newtheorem{corollary}{Corollary}
\newtheorem{remark}{Remark}
\newtheorem*{remark*}{Remark}
\newtheorem{definition}{Definition}
\newtheorem*{theorem*}{Theorem}
\newtheorem*{conjecture*}{Conjecture}

\newtheorem*{notation*}{Notation}

\newtheorem*{app*}{Application to analytic GIT-quotients}
\newtheorem*{appl*}{Application}

\numberwithin{equation}{section}

\baselineskip=16pt

\hfuzz 5pt \vfuzz 5pt

\newcommand{\C}{{\mathbb C} }

\newcommand{\cA}{{\mathcal A} }
\newcommand{\cB}{{\mathcal B} }

\newcommand{\cE}{{\mathcal E} }
\newcommand{\cF}{{\mathcal F} }
\newcommand{\cG}{{\mathcal G} }

\newcommand{\cK}{{\mathcal K} }

\newcommand{\cM}{{\mathcal M} }
\newcommand{\cN}{{\mathcal N} }
\newcommand{\cO}{{\mathcal O} }
\newcommand{\cP}{{\mathcal P} }
\newcommand{\cQ}{{\mathcal Q} }

\newcommand{\cX}{{\mathcal X} }

\newcommand{\cH}{{\mathcal H} }

\newcommand{\wt}{\widetilde}
\newcommand{\wh}{\widehat}

\newcommand{\tbullet}{{\mbox{{\tiny\textbullet}}}}
\newcommand{\xdownarrow}[1]{%
  {\left\downarrow\vbox to #1{}\right.\kern-\nulldelimiterspace}
}
\makeatletter
\newcommand*\bigcdot{\mathpalette\bigcdot@{.5}}
\newcommand*\bigcdot@[2]{\mathbin{\vcenter{\hbox{\scalebox{#2}{$\m@th#1\bullet$}}}}}
\makeatother


\def\ol#1{{\overline{#1}}}
\def\ul#1{{\underline{#1}}}

\def\pt{\partial}

\def\we{\wedge}

\def\db{\ol\pt}
\def\s2{/\hspace{-3pt}/}
\def\san2{/\hspace{-3pt}/_{\hspace{-1pt}an}}

\makeatletter
\def\blfootnote{\xdef\@thefnmark{}\@footnotetext}
\makeatother

\hyphenation{poly-stable}

\def\ka{K\"ah\-ler}

\def\jh{Seshadri}

\author{Nicholas Buchdahl}
\address{School of Mathematical Sciences\\ University of Adelaide\\ Adelaide\\
Australia 5005}
\email{nicholas.buchdahl@adelaide.edu.au}

\author{Georg Schumacher}
\address{Fachbereich Mathematik und Informatik, Philipps-Universit\"at Marburg\\ Lahnberge, Hans-Meerwein-Stra{\ss}e, D-35032 Marburg, Germany}
\email{schumac@mathematik.uni-marburg.de}

\begin{document}

\title[Analytic Application of Geometric Invariant Theory II]{An Analytic Application of \\ Geometric Invariant Theory II:\\ Coarse Moduli Spaces}

\begin{abstract}
In \cite{b-s} the authors constructed a classifying space for polystable holomorphic vector bundles on a compact \ka\ manifold using analytic GIT theory. The aim of this article is to show that this classifying space taken in the weakly normal category is a coarse moduli space in the sense of complex geometry when the topology is fixed as induced by the space of Hermite-Einstein connections modulo the group of unitary gauge transformations.
\end{abstract}

\keywords{Analytic GIT-quotients, polystable vector bundles on compact \ka\ manifolds, Hermite-Einstein connections, moduli spaces}

\blfootnote{2020 Mathematics Subject Classification. 32G13, 14L24, 32L10, 32Q15}

\maketitle

\section{Introduction}

Let $X$ be a compact \ka\ manifold. In \cite{b-s} a reduced complex space $\cM_{GIT}$ was constructed, whose points correspond to isomorphism classes of {\em polystable} holomorphic vector bundles on $X$ by means of analytic Geometric Invariant Theory. It contains the coarse moduli space $\cM$ of {\em stable} holomorphic vector bundles as the complement of a closed analytic subset. This space is a union of local analytic GIT-spaces $S\san2 G$, where $S$ is the parameter space of a semi-universal deformation of a polystable bundle $E_0$, and $G$ is the complex reductive group of automorphisms of $E_0$ (modulo ineffectivity kernel). Such a space was called {\em classifying space}, because it satisfies slightly weaker axioms than a coarse moduli space.

In this article, the question of the existence of a coarse moduli space of polystable holomorphic vector bundles is addressed. The main problem is that in general there does not exist a deformation theory for (the category of) polystable vector bundles, where all fibers must be polystable. The notion of a coarse moduli space is closely related to the notion of holomorphic families of such objects. Since any semi-universal (local) deformation of a polystable vector bundle taken in the class of holomorphic vector bundles contains semi-stable but not polystable bundles in general, a GIT construction was necessary which identifies semistable bundles with polystable ones, if these are in the closure of the same $G$-orbit. For these reasons the construction of a coarse moduli space is fundamentally different from any construction that is mainly based upon deformation theory, where the uniqueness of a coarse moduli space always follows automatically from the existence of universal deformations for the given class of objects.

From the standpoint of complex analysis a coarse moduli space would be equipped with the quotient topology induced by holomorphic families of polystable bundles.

On the other hand, from the outset the space $\cM_{GIT}$ is equipped with a natural (Hausdorff) topology, namely the space of integrable, Hermite-Einstein connections modulo unitary gauge transformations (see the work of Donaldson-Kronheimer \cite[Sect.~4.2.1]{d-k} or also of Atiyah, Hitchin, and Singer \cite[\S6]{AHS}). An analogous situation for cscK manifolds has been treated by Dervan and Naumann in \cite{d-n}. In this sense the topology of a coarse moduli space can be considered as given (the classical moduli space of stable bundles carries this topology \cite{b-s}).

The existence of a coarse moduli space (equipped with a given topology) of polystable holomorphic vector bundles will be shown in the category of weakly normal complex spaces.

It was shown in \cite{b-s} that the closure of the moduli space of stable bundles in $\cM_{GIT}$ satisfies the axioms of a coarse moduli space. Here the technical method is to treat {\em \jh} filtrations which are also known as {\em Jor\-dan-Höl\-der} filtrations of length larger than one.

For any irreducible component of a parameter space $S$ the length of a \jh\ filtration need not be constant but it is minimal over the complement of a thin subset. The term thin is used here for an analytically constructible set that is nowhere dense (see Lemma~\ref{le:locan} below). Ultimately for any irreducible component of $S$ an analytic subset can be constructed with the same image in the analytic GIT-quotient such that the complement of a thin subset parameterizes polystable bundles in the sense of a holomorphic family. In this way the non-existence of a deformation theory for polystable bundles is circumvented by constructing a theory of families of filtrations.

\section{Weakly Normal Complex Spaces}\label{se:wenorm}
Some facts need to be gathered (cf.\ \cite{fi}).
A reduced complex space $(S,\cO_S)$ is called {\em weakly normal} or {\em maximal}, if all continuous functions that are holomorphic on the complement of the set of singularities are actually holomorphic. There exists a holomorphic map $\nu: \wh X \to X$, where $\wh X$ is weakly normal and $\nu$ is a homeomorphism which is biholomorphic over the complement of the set of non-maximal/non-weakly normal points in $X$. The set of weakly normal complex spaces together with holomorphic mappings is known to constitute a category. According to Andreotti and Norguet \cite{an} the weak normalization $\wh\pi:\wh X \to X$ is the quotient of the usual normalization $X^{n}/\sim$ by the proper analytic equivalence relation $\sim$, where two points are identified if their images in $X$ coincide.

\begin{proposition}\label{pr:normM}
Let $\cM_{GIT}$ be the classifying space for polystable vector bundles on the compact \ka\ manifold $(X,\omega_X)$. Then the weak normalization $\wh\cM_{GIT}$ is a classifying space for polystable vector bundles in the category of weakly normal complex spaces: i.e.\ Definition~4 of \cite{b-s} holds for weakly normal parameter spaces.
\end{proposition}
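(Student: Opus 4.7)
The plan is to combine the classifying property of $\cM_{GIT}$ established in \cite{b-s} with the universal property of the weak normalization. Two facts drive the argument: (i) the weak normalization map $\nu:\wh\cM_{GIT}\to\cM_{GIT}$ is a homeomorphism that is biholomorphic over the weakly normal locus of $\cM_{GIT}$; and (ii) for any weakly normal complex space $S$ and any holomorphic map $f:S\to\cM_{GIT}$, the continuous composition $\nu^{-1}\circ f:S\to\wh\cM_{GIT}$ is automatically holomorphic, since it is holomorphic off the preimage of the (thin) non-weakly-normal locus, and this lifts through the characterizing property of the weakly normal structure sheaf of $S$. Property (ii) gives a unique holomorphic lift $\tilde f:S\to\wh\cM_{GIT}$ with $\nu\circ\tilde f=f$.

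First I would transfer the set-theoretic and topological part of the classifying space structure: because $\nu$ is a homeomorphism, the bijection between points of $\cM_{GIT}$ and isomorphism classes of polystable bundles on $X$ pulls back verbatim to $\wh\cM_{GIT}$, and the Hermite-Einstein topology on the latter coincides with the one on the former. Next, given a weakly normal parameter space $S$ together with a holomorphic family $\cE$ of polystable vector bundles on $X\times S$, the classifying property of $\cM_{GIT}$ in the sense of Definition~4 of \cite{b-s} provides a holomorphic classifying map $f_\cE:S\to\cM_{GIT}$ with $f_\cE(s)=[\cE_s]$. Applying the universal property (ii) produces a unique holomorphic lift $\tilde f_\cE:S\to\wh\cM_{GIT}$ whose value at $s$ is again the isomorphism class of $\cE_s$, now read as a point of $\wh\cM_{GIT}$.

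The remaining clauses of Definition~4, namely uniqueness of the classifying map and functoriality under pullback along holomorphic maps between weakly normal parameter spaces, transfer formally: a competing candidate $\tilde g:S\to\wh\cM_{GIT}$ agreeing with $\tilde f_\cE$ on points has $\nu\circ\tilde g$ set-theoretically equal to $f_\cE$, hence equal to $f_\cE$ by the uniqueness clause already available in $\cM_{GIT}$, and then equal to $\tilde f_\cE$ by the uniqueness part of (ii); functoriality is similarly inherited via $\nu$ together with the uniqueness of lifts. The only real obstacle is bookkeeping: checking that every requirement of Definition~4 of \cite{b-s} has a counterpart that can be transported through the homeomorphism $\nu$, and that no requirement implicitly invokes behaviour over non-weakly-normal test spaces (which are excluded from the weakly normal category to begin with). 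The substantive mathematical content is entirely packaged in the universal property of $\nu$ and in the already established classifying property of $\cM_{GIT}$.
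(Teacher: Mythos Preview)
Your approach to conditions (i) and (ii) is correct and close in spirit to the paper's: both rely on the universal lifting property of the weak normalization (any holomorphic map from a weakly normal space into $\cM_{GIT}$ lifts uniquely and holomorphically to $\wh\cM_{GIT}$). The paper routes this through the local picture, first proving in Lemma~\ref{le:normS} that $\wh{\wt S\san2 G}=\wh{\wt S}\san2 G$ so that $\wh\cM_{GIT}$ itself carries a local analytic GIT structure, and then sending a test space $Z$ through $Z\to S\to\wh S\to\wh{\wt S}\san2 G\subset\wh\cM_{GIT}$; you instead invoke the global classifying map $Z\to\cM_{GIT}$ from \cite{b-s} as a black box and lift. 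Both work. One minor slip: the families appearing in Definition~4(ii) of \cite{b-s} are only required to have a polystable fiber at the \emph{distinguished} point, not everywhere; this is conceptually important (indeed it is the reason the GIT construction is needed at all) but does not affect your lifting argument.

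There is, however, a genuine gap in your treatment of condition (iii). That clause is not ``uniqueness of the classifying map'' nor ``functoriality under pullback''; it is the universality of the classifying space itself: given another weakly normal space $\cQ$ satisfying (i) and (ii), one must show that the induced set-theoretic bijection is compatible with the holomorphic maps from test spaces, i.e.\ that any $Z\to\cQ$ arising from a family factors through $\wh\cM_{GIT}$. The paper does this by regarding the weakly normal $Z$ as a reduced space, invoking the proof of \cite[Theorem~9]{b-s} to obtain a holomorphic $Z\to\cM_{GIT}$ compatible with the bijection $\cM_{GIT}\to\cQ$, and then lifting to $Z\to\wh\cM_{GIT}$ by weak normality of $Z$. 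Your lifting machinery is precisely what is needed here as well, but as written you have addressed the wrong statement.
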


The following fact for restricted group actions in the sense of \cite[Sec.~2.1]{b-s} is mentioned first. Let a complex reductive group $G$ act linearly on a vector space $V$, and let $S$ be a closed analytic subset of an open subset $U\subset V$ amounting to a restricted action on $S$. The restricted action of $G$ is extended to a holomorphic action on a larger space $\wt S$ containing $S$ defined in \cite[Sec.~2.1]{b-s}.

\begin{lemma}\label{le:normS}
 The weak normalization $\wh{\wt S\san2 G}$ of the analytic quotient is equal to the analytic quotient $\wh{\wt S}\san2 G$ of the  weak normalization.
\end{lemma}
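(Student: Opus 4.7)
The strategy is to reduce to the fact that normalization commutes with reductive quotients, and then to invoke the Andreotti-Norguet description of weak normalization as the quotient of the normalization by the proper analytic equivalence relation $\sim$ that identifies points lying over the same point in the original space. Thus
\[
\wh{\wt S\san2 G} = (\wt S\san2 G)^n/{\sim} \qquad\text{and}\qquad \wh{\wt S} = \wt S^n/{\sim},
\]
and the plan is to show that $\sim$ and the $G$-action descend compatibly, so they may be imposed in either order on $\wt S^n$.

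First I would lift the restricted $G$-action on $\wt S$ to a holomorphic action on the normalization $\wt S^n$: each $g\in G$ acts biholomorphically on $\wt S$, and functoriality of normalization lifts it uniquely to a biholomorphism of $\wt S^n$ intertwined by the normalization map $\nu\colon \wt S^n\to \wt S$. These lifts assemble into a holomorphic action that is again restricted in the sense of \cite[Sec.~2.1]{b-s}, so the analytic quotient $\wt S^n \san2 G$ makes sense. I would then establish
\[
(\wt S\san2 G)^n \;\cong\; \wt S^n \san2 G.
\]
The right-hand side is reduced and carries a natural finite bimeromorphic morphism to $\wt S\san2 G$ induced by $\nu$; by the universal property of normalization it suffices to verify that $\wt S^n \san2 G$ is itself normal. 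Locally on Stein slices the analytic GIT quotient of \cite{b-s} is represented by the ring of $G$-invariant holomorphic functions, and for reductive $G$ the ring of invariants in a normal analytic algebra remains integrally closed (one checks integral dependence relations using the Reynolds-type averaging projection already employed in \cite{b-s}).

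With this identification in hand, the Andreotti-Norguet relation $\sim$ on $\wt S^n$ is $G$-invariant because $\nu$ is $G$-equivariant, hence it descends to a proper analytic equivalence relation on $\wt S^n\san2 G$. Under the isomorphism above, this descended relation is precisely the Andreotti-Norguet relation on $(\wt S\san2 G)^n$: two points of the normalization are identified exactly when their images in $\wt S\san2 G$ coincide, equivalently when the closures of the corresponding $G$-orbits in $\wt S^n$ map to intersecting orbit closures in $\wt S$. Since both the $G$-action and $\sim$ are proper analytic equivalence relations on $\wt S^n$ and they commute, the quotient is independent of the order in which they are imposed, which gives the chain of identifications
\[
\wh{\wt S\san2 G} = (\wt S\san2 G)^n/{\sim} = (\wt S^n\san2 G)/{\sim} = (\wt S^n/{\sim})\san2 G = \wh{\wt S}\san2 G.
\]

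I expect the main obstacle to lie in the second step, the commutativity of normalization with the analytic GIT quotient. In the algebraic setting this is a textbook fact, but in the restricted-action, complex-analytic framework of \cite{b-s} one must verify normality of $\wt S^n\san2 G$ directly, tracking how the local Stein-slice construction interacts with integral closure and using reductivity of $G$ in the guise of the Reynolds averaging operator. Once this compatibility is established, the descent argument needed to exchange the order of the two quotients is formal.
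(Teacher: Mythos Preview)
Your route is quite different from the paper's, which is much shorter. The paper never passes through the full normalization or the Andreotti--Norguet description. Instead it argues directly: the $G$-action on $\wt S$ lifts to the weak normalization, weak normality is preserved under the extension $\wt{(\cdot)}$ and under taking the analytic GIT quotient, so $\wt{\wh S}\san2 G$ is already weakly normal; then \cite[Theorem~3]{b-s} furnishes the identification with $\wh{\wt S\san2 G}$. What this buys is that one never has to prove that \emph{normalization} commutes with the analytic quotient---only that weak normality is inherited by the quotient, which is a softer statement.

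The obstacle you flag in your second step is real and, in the restricted-action framework of \cite{b-s}, not obviously surmountable by the Reynolds-operator sketch you give: the analytic GIT quotients there are built by a slice-theoretic gluing procedure rather than as spectra of invariant rings, so transporting the algebraic ``invariants of a normal ring are normal'' argument requires genuine additional work. Even granting that step, the final interchange $(\wt S^n\san2 G)/{\sim} = (\wt S^n/{\sim})\san2 G$ needs more than the slogan ``two proper equivalence relations commute'', because the GIT relation identifies points whose orbit \emph{closures} meet, not merely orbits; you would still have to verify that the relation on $\wt S^n\san2 G$ induced from $(\wt S\san2 G)^n$ agrees with the one induced by the map to $\wh{\wt S}\san2 G$. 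The paper sidesteps both issues by working with weak normality throughout and invoking \cite[Theorem~3]{b-s}.
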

\begin{proof}
  The action of $G$ on $\wt S$ clearly extends to the weak normalization $\wh{\wt S}$. In this way the analytic GIT-quotient $\wt{\wh S}\san2 G$ is defined along the lines of \cite[Sec.\ 2]{b-s}. As $\wh S$ is weakly normal, so are $\wt{\wh S}$, and finally $\wt{\wh S}\san2 G$. The latter space is equal to $\wh{\wt S\san2 G}$ by \cite[Theorem 3]{b-s}.
\end{proof}

\begin{proof}[Proof of the Proposition]
The notation from \cite{b-s} is used. First the space $\wh \cM_{GIT}$ is equipped with the local structure of an analytic GIT-quotient.

Let $S$ be the parameter  space for a semi-universal deformation of a polystable holomorphic vector bundle $E_0$ on $X$. Then the weak normalization $\wh S \to S$ defines a pull-back of the semi-universal deformation. It follows immediately that the group $G$ of automorphisms of $E_0$ acts on $\wh S$ by pulling back the action on $S$ so that Lemma~\ref{le:normS} is applicable. Since the weak normalization map $\wh Y \to Y$ of a reduced complex space $Y$ is a homeomorphism, the underlying points of $\wh \cM_{GIT}$ still correspond to isomorphism classes of polystable bundles.  This implies condition (i) of \cite[Definition~4]{b-s}.

Now let a family of holomorphic vector bundles over a weakly normal space $Z$ be given such that the fiber of a distinguished point is polystable (cf.\ \cite[Definition~4]{b-s}). This family is induced (after replacing the parameter space by a neighborhood of the distinguished point, if necessary) by a semi-universal deformation via a base change map $Z \to S$, which can be lifted to a map $Z\to \wh S$. The latter map is composed with the canonical map $\wh S \to \wt{\wh S}\san2 G \subset \wh\cM_{GIT}$ implying condition (ii) of \cite[Definition~4]{b-s}.

Condition (iii) is the uniqueness of  $\wh \cM_{GIT}$ given (i) and (ii). Let $\cQ$ be another space satisfying conditions (i) and (ii). Let $Z$ be a weakly normal parameter space in the sense of (iii) with an induced  map $Z \to \cQ$. Now $Z$ is interpreted as a reduced parameter space so that the proof of \cite[Theorem~9]{b-s} can be applied. It yields a holomorphic map $Z \to \cM_{GIT}$ compatible with the set theoretic bijection $\cM_{GIT} \to \cQ$. Since $Z$ is weakly normal the above map can be lifted to a map $Z\to \wh{\cM_{GIT}}$ which proves (iii).
\end{proof}

The proposition below is already contained in \cite{b-s}. By \cite[Remark~5]{b-s} the set of polystable points in $\cM_{GIT}$ that are not stable is a closed analytic subset. In fact the complement is the usual coarse moduli space $\mathcal M$ of isomorphism classes of stable holomorphic vector bundles. Let $\wh \cM$ be the coarse moduli space of stable vector bundles taken in the category of weakly normal spaces.

For any local analytic GIT-quotient $\wt S\san2 G$ of a space $(S,s_0)$ consider those local irreducible components such that $s_0$ is in the closure of the set of points with {\em stable} fibers. These give rise to irreducible components of the local analytic GIT-quotients. The union is the closure $\ol\cM$ of $\cM$ in $\cM_{GIT}$. Since the underlying topology of a complex space is not changed by taking the weak normalization, the following argument can be applied. The normalization $\wh{\ol\cM}$ is equal to the union of the spaces $\wh{\wt S}\san2 G$ where the parameter spaces $S$ are taken with the above extra condition. In this way the proposition below follows.

\begin{proposition}\label{pr:Mbar}
  Let $\ol{\wh\cM} \subset \wh{\cM}_{GIT}$ be the closure taken in the classifying space. Then $\ol{\wh\cM}$ can be identified with the weak normalization $\wh{\ol \cM}$. It carries a natural structure of an analytic GIT-space.
\end{proposition}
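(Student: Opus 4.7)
The plan is to reduce to the local analytic GIT picture and apply Lemma~\ref{le:normS}, essentially making rigorous the sketch given in the paragraph preceding the proposition.

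First, I would observe that the weak normalization map $\nu:\wh{\cM_{GIT}}\to\cM_{GIT}$ is a homeomorphism of underlying topological spaces (Andreotti--Norguet), so as a topological subspace one has $\ol{\wh\cM}=\nu^{-1}(\ol\cM)$. Restricted to the open subset $\cM\subset\cM_{GIT}$, the map $\nu$ realizes the weak normalization of $\cM$, which is identified with $\wh\cM$ by the universal property in the weakly normal category (cf.\ Proposition~\ref{pr:normM}). What remains is to verify that the closure $\ol{\wh\cM}$ inside the weakly normal space $\wh{\cM_{GIT}}$ carries the complex structure of the weak normalization of $\ol\cM$, together with a natural structure of a local analytic GIT-space.

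Second, I would check the identification in the local GIT charts. Around a polystable bundle $E_0\in\cM_{GIT}$ one has the local model $\wt S\san2 G$, where $S$ is the parameter space of a semi-universal deformation of $E_0$. I would decompose the germ $(S,s_0)$ into irreducible components and let $S'\subset S$ be the union of those components whose generic points parameterize stable bundles (equivalently, $s_0$ lies in the closure of the stable locus of each such component). By the discussion preceding the proposition, the image of the $G$-saturation $\wt{S'}\subset \wt S$ in $\wt S\san2 G$ is the local model of $\ol\cM$ near $E_0$.

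Third, I would apply Lemma~\ref{le:normS} with $S$ replaced by $S'$, obtaining $\wh{\wt{S'}\san2 G}=\wh{\wt{S'}}\san2 G$. The left-hand side is by construction a local model of $\wh{\ol\cM}$, while the right-hand side sits inside $\wh{\wt S}\san2 G$ precisely as the union of those irreducible components whose generic points come from the stable locus---that is, as the local model of $\ol{\wh\cM}$ in $\wh{\cM_{GIT}}$. Patching these local identifications across the cover of $\cM_{GIT}$ by GIT charts, exactly as in the construction of $\cM_{GIT}$ in \cite{b-s}, transports the complex structure of $\wh{\ol\cM}$ unambiguously onto $\ol{\wh\cM}$ and simultaneously endows it with a natural analytic GIT-structure.

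The main technical point I expect to be the obstacle is the compatibility of weak normalization with restriction to a union of irreducible components, so that $\wh{\wt{S'}}$ sits canonically as a $G$-stable closed analytic subspace of $\wh{\wt S}$. This should reduce, via the description $\wh X=X^{n}/\sim$, to the elementary fact that the normalization of a reduced complex space is the disjoint union of the normalizations of its irreducible components; the relation $\sim$ then descends correctly because it depends only on the images in $\wt S$, which are the same whether computed from $S'$ or from $S$.
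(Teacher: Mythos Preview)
Your proposal is correct and follows essentially the same route as the paper's argument (given in the paragraph immediately preceding the proposition): identify $\ol\cM$ locally as the union of those irreducible components of $\wt S\san2 G$ arising from components of $S$ with $s_0$ in the closure of the stable locus, use that weak normalization is a homeomorphism, and apply Lemma~\ref{le:normS} to conclude. Your write-up is more explicit about the role of Lemma~\ref{le:normS} and about the compatibility of weak normalization with passage to a union of components, but these are precisely the details underlying the paper's sketch.
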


\section{Coarse Moduli Spaces}
It was already mentioned that the uniqueness axiom (iii) below for a coarse moduli space requires special attention when GIT methods are applied.
\begin{definition}\label{de:coarse}
  Let $\cK$ be the class of polystable holomorphic vector bundles on a compact \ka\ manifold $X$. A reduced complex space $\cN$ is called coarse moduli space with a distinguished topology for $\cK$, if the following hold:
  \begin{itemize}
    \item[(i)] The points of $\cN$ correspond uniquely to isomorphism classes of polystable bundles on $X$.
    \item[(ii)] Let $\cE$ be a holomorphic family of polystable vector bundles on $X$ parameterized by a complex space $Z$. Then the natural map $\varphi: Z \to \cN$ sending a point $z\in Z$ to the isomorphism class of the fiber $\cE_z$ is holomorphic.
    \item[(iii)] Let $\cQ$ be another reduced complex space satisfying (i) and (ii) so that the induced set-theoretical natural map $\chi:\cN \to \cQ$ is a homeomorphism. Then $\chi$ is holomorphic.
  \end{itemize}
\end{definition}
Observe that the above notion is also meaningful if all parameter spaces of holomorphic families and morphisms are taken from the category of weakly normal spaces.

Consider the space $\ol{\wh\cM}$ from Proposition~\ref{pr:Mbar}. A first result on the existence of a coarse moduli space is the following.

\begin{proposition}
  The space $\ol{\wh\cM}$ is a coarse moduli space, namely the coarse moduli space of polystable holomorphic vector bundles that possess stable holomorphic bundles as local deformations taken in the category of weakly normal spaces.
\end{proposition}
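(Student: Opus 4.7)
The plan is to verify the three axioms of Definition~\ref{de:coarse} for $\ol{\wh\cM}$, with the class $\cK$ taken to be those polystable bundles whose semi-universal deformation contains stable bundles. Axiom (i) is immediate from the description of $\ol{\wh\cM} = \wh{\ol\cM}$ given before Proposition~\ref{pr:Mbar}: locally it consists precisely of those irreducible components of the GIT-quotients $\wh{\wt S}\s2 G$ whose distinguished point lies in the closure of the stable locus, and these correspond exactly to the polystable isomorphism classes in $\cK$.

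Axiom (ii) I would reduce to the analogous property of $\wh\cM_{GIT}$ established in Proposition~\ref{pr:normM}. Given a holomorphic family $\cE$ of bundles in $\cK$ over a weakly normal space $Z$, that proposition supplies a holomorphic map $\varphi: Z \to \wh\cM_{GIT}$. Since every fibre lies in $\cK$, the set-theoretic image of $\varphi$ is contained in $\ol{\wh\cM}$; and since the stable locus $\wh\cM$ is open in $\wh\cM_{GIT}$ (cf.\ \cite{b-s}, Remark~5), its closure $\ol{\wh\cM}$ is a closed analytic subspace, so $\varphi$ factors holomorphically through $\ol{\wh\cM}$.

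The uniqueness axiom (iii) is the delicate step. Given $\cQ$ satisfying (i) and (ii) with a set-theoretic homeomorphism $\chi: \ol{\wh\cM} \to \cQ$, the strategy is first to establish holomorphicity on the stable locus and then to extend across the boundary. The open subset $\cQ_{st} := \chi(\wh\cM) \subset \cQ$ parameterizes exactly the stable isomorphism classes; because every holomorphic family of stable bundles lies automatically in $\cK$, axiom (ii) for $\cQ$ yields a holomorphic factorization through $\cQ_{st}$, so $\cQ_{st}$ satisfies the axioms of a coarse moduli space for stable bundles in the weakly normal category. The universal property of $\wh\cM$ then forces $\chi|_{\wh\cM}$ to be holomorphic. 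To extend across $\ol{\wh\cM}\setminus\wh\cM$, which is a closed analytic nowhere-dense subset, I invoke the weak normality of $\ol{\wh\cM}$: pulling back any local holomorphic function on $\cQ$ through the continuous map $\chi$ gives a continuous function on $\ol{\wh\cM}$ which is holomorphic off the nowhere-dense analytic subset $\ol{\wh\cM}\setminus\wh\cM$, hence is holomorphic on all of $\ol{\wh\cM}$. This makes $\chi$ a morphism of complex spaces.

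The main obstacle I expect is this final extension step, which rests entirely on the weak normality of $\ol{\wh\cM}$; without that hypothesis one would need a substantially different argument to bridge the gap between holomorphicity on the open stable stratum and holomorphicity on its closure, where non-stable polystable bundles sit on potentially singular, non-normal strata of the classifying space.
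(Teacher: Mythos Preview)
Your proposal is correct and follows essentially the same route as the paper: verify axiom~(i) set-theoretically from the description preceding Proposition~\ref{pr:Mbar}, deduce axiom~(ii) from Proposition~\ref{pr:normM} together with the fact that $\ol{\wh\cM}$ is a closed analytic subspace, and for axiom~(iii) establish holomorphicity of $\chi$ on the dense open stable locus and extend via weak normality and the Riemann removable singularity theorem. Your justification for why $\chi|_{\wh\cM}$ is holomorphic---namely that $\cQ_{st}$ inherits the coarse moduli axioms for stable bundles so that the universal property of $\wh\cM$ applies---is more explicit than the paper's, which simply asserts this restriction is ``known'' to be holomorphic.
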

\begin{proof}
Set-theoretically the space $\ol{\wh\cM}$ is exactly the space of isomorphism classes stated in the Theorem~\ref{th:main}. Let $Z$ be a (weakly normal) parameter space for a family of {\em polystable} bundles, where all fibers possess local deformations with stable fibers. Then the map $Z\to \wh{\cM}_{GIT}$ sending a point to its isomorphism class is holomorphic by Proposition~\ref{pr:normM}, and it has values in $\ol{\wh\cM}$. This proves condition (ii). In order to prove (iii) let $\cQ$ be a (weakly normal) space satisfying (i) and (ii). The set-theoretic map $\chi:\ol{\wh\cM} \to \cQ$ that is compatible with identifying points and isomorphism classes of bundles is a homeomorphism by assumption. It is known to be holomorphic when restricted to the coarse moduli space of stable bundles. The map $\chi$ restricted to the stable locus is holomorphic, hence $\chi$ is holomorphic by the Riemann removable singularity theorem and weak normality.
\end{proof}

\section{Summary of Previous Results}\label{se:prl}
The notation from \cite{bu-sch} is being recalled and some of the results will be listed. Let $d_0=\pt_0+\ol\pt_0$ be a hermitian connection on the hermitian bundle
$\rm E_h$ with curvature $F(d_0)$ being of type $(1,1)$
and satisfying $i\Lambda F(d_0))=\lambda\,1$ for some constant
$\lambda$, and let $E_0$ denote the polystable holomorphic bundle defined by
this connection. Denote by $A^{0,q}$ the global $(0,q)$-forms on $X$
and by $H^{0,q}$ the space of $\db_0$-harmonic $(0,q)$-forms
with coefficients in $\rm End(E_h)$.

With $p>2 \dim X$ fixed, from Lemma 2.5 and Proposition 2.6 of \cite{bu-sch}, there is an open $L^p_1$-neighbourhood of $d_0$ such that every connection in this neighbourhood is complex gauge equivalent to another one in this neighbourhood of the form $d_a=d_0+a$ where
\begin{eqnarray*}
a&=&a'+a''\\
a'&=&-(a'')^*\in A^{0,1}(\mathrm{End(E_h)})\\
a'' &=& \alpha+\db_0^*\beta
\end{eqnarray*}
for some $\alpha\in H^{0,1}$ and some unique $\beta\in A^{0,2}(\mathrm{End( E_h))}$ orthogonal in $L^2$ to $\ker \db_0^*$ such that
$$
\db_0^*(\db_0a''+a''\we a'')=0.
$$
The construction of $\beta$ is via the implicit function theorem, and $\beta$ is uniquely determined by $\alpha$. From Sec.~1 of \cite{bu-sch}, once $a\in A^1(\mathrm{End(E_h)})$ is in the ``standard'' form above, $||a||_{L^p_1} \le C||\alpha||_{L^2}$ for some constant $C=C(d_0)$.

There is an open ball $U$ centered at the origin in  $H^{0,1}$  and a holomorphic map
$$
H^{0,1}\supset U \stackrel{\Psi}{\longrightarrow} H^{0,2}
$$
defined by
\begin{equation}
\Psi(\alpha)= \Pi^{0,2}(\db_0a'' + a''\we a'')
\end{equation}
the zeros of which correspond precisely to the connections $d_a$ near $d_0$ in standard form that are integrable; i.e., have curvatures of type $(1,1)$. The integrability condition determines an analytic subset $\Psi^{-1}(0)=S\subset U$.
The notation $s\in S$ is natural when dealing with the parameter of a holomorphic family, which at the same time can be identified with a connection form $\alpha$ or $a''$ resp. So for $s\in S$ corresponding to an integrable semi-connection
\begin{equation}\label{eq:spfo}
\db_0 + a'' \text{ with } a''= \alpha+ \db^*_0\beta
\end{equation}
and $a= a' + a''$ given as above, the holomorphic bundle $E_\alpha$  is that determined by
$$
d_a = d_0 + a.
$$
The principal result of \cite[Theorem 3]{bu-sch} is that the holomorphic bundle $E_{\alpha}$ determined by $\alpha\in \Psi^{-1}(0)$ is [poly]stable if and only if
$\alpha$ is [poly]stable with respect to the action of the automorphism group $\Gamma$ of $E_0$ acting on $H^{0,1}$. The Kempf-Ness theorem identifies the
polystable orbits of $\Gamma$ as precisely those containing a zero of the moment map $m$ for the action, which is given in \cite[Cor.\ 5.3]{bu-sch}:
\begin{equation}\label{eq:mom}
m(\alpha) = \Pi^{0,0}i\Lambda(\alpha\wedge\alpha^*+
\alpha^*\wedge\alpha)\;.
\end{equation}
Throughout this article the following fact will be needed:
\begin{proposition}[{\cite[Prop.\ 3.2, 4.4]{bu-sch}}]\label{pr:destable}
 For $d_0$ as above, every integrable connection in an $L^p_1$ neighbourhood of $d_0$ defines a semi-stable holomorphic bundle $E$, and every destabilising subsheaf $A\subset E$ with torsion-free quotient is actually a subbundle.
\end{proposition}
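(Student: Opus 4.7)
The plan is to derive both claims from the Chern-Weil formula for the slope of a coherent subsheaf, exploiting that $d_a$ is an $L^p_1$-small perturbation of the Hermite-Einstein connection $d_0$.

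First I would invoke the theorem of Uhlenbeck-Yau on weakly holomorphic $L^p_1$-projections: any coherent subsheaf $A\subset E$ with torsion-free quotient is represented by a Hermitian $L^p_1$-projection $\pi_A$ on $\mathrm{E}_h$ satisfying $\pi_A^2=\pi_A=\pi_A^*$ and the weak equation $(\mathrm{id}-\pi_A)\,\bar\partial_a\pi_A=0$, where $\bar\partial_a=\bar\partial_0+a''$. The classical Chern-Weil identity for such a projection reads, up to normalisation,
$$
\mu(A)\,\mathrm{rank}(A)\,\mathrm{vol}(X) = \int_X\mathrm{tr}\bigl(\pi_A\cdot i\Lambda F(d_a)\bigr)\frac{\omega_X^n}{n!} - \int_X|\bar\partial_a\pi_A|^2\frac{\omega_X^n}{n!}.
$$
Since $d_0$ is Hermite-Einstein with $i\Lambda F(d_0)=\lambda\cdot\mathrm{id}$ and $i\Lambda F(d_a)-\lambda\cdot\mathrm{id}=O(\|a\|_{L^p_1})$ in $L^p$, and since $\mathrm{tr}(\pi_A)=\mathrm{rank}(A)$, one obtains the key uniform inequality
$$
\mu(A) \le \mu(E) + C\,\|a\|_{L^p_1},
$$
with $C=C(d_0)$ independent of $A$, using the fact that $\mu(E)=\mu(E_0)$ is determined by the fixed topological type of $\mathrm{E}_h$.

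For part (1), I would argue by contradiction. Suppose no $L^p_1$-neighbourhood of $d_0$ suffices; then there is a sequence $d_{a_n}\to d_0$ of integrable connections and subsheaves $A_n\subset E_{a_n}$ with $\mu(A_n)>\mu(E)$. After passing to a subsequence of constant rank $r'$, the projections $\pi_{A_n}$ are uniformly bounded in $L^p_1$ (the Chern-Weil identity and the inequality above control their norms), so by Banach-Alaoglu and Rellich a subsequence converges weakly in $L^p_1$ and strongly in $L^p$ to a limit $\pi_0$ still satisfying the projection identities and the limiting weak equation $(\mathrm{id}-\pi_0)\,\bar\partial_0\pi_0=0$. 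Thus $\pi_0$ represents a subsheaf $A_0\subset E_0$, and Fatou applied to the $|\bar\partial_a\pi_A|^2$-integral together with strong-$L^p$ convergence of the first integral yields $\mu(A_0)\ge\limsup\mu(A_n)>\mu(E_0)$. This contradicts the semi-stability of $E_0$ implied by its polystability.

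For part (2), semi-stability of $E$ forces any destabilising subsheaf $A$ to satisfy $\mu(A)=\mu(E)$; since $A$ is saturated (the quotient is torsion-free) it is automatically reflexive, so the only remaining obstruction to $A$ being a subbundle is smoothness of $\pi_A$. I would feed the weak equation $(\mathrm{id}-\pi_A)\,\bar\partial_a\pi_A=0$ into the Uhlenbeck-Yau regularity bootstrap: the equation is elliptic modulo the projector $(\mathrm{id}-\pi_A)$, and because $d_a-d_0\in L^p_1$ with $p>2\dim X$, Sobolev multiplication makes all nonlinear terms generated by the perturbation $a''$ tame enough that the classical iteration of Uhlenbeck-Yau carries through to give $\pi_A\in C^\infty$. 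A smooth Hermitian projection corresponds to a holomorphic subbundle, as required.

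The main obstacle, in my view, lies in the compactness step of part (1): one must verify that the weak-$L^p_1$ limit $\pi_0$ still satisfies the projection identities and the weak-holomorphicity condition, and that the Fatou semi-continuity combined with the convergence of the curvature integrals yields the strict inequality $\mu(A_0)>\mu(E_0)$ in the limit. These points involve weak-strong products in Sobolev spaces and are where the choices $p>2\dim X$ and the $L^p_1$ topology become essential. The regularity bootstrap in part (2) is by comparison a direct adaptation of classical Uhlenbeck-Yau.
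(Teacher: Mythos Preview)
The paper does not prove this proposition; it is quoted from the authors' companion paper \cite[Prop.~3.2 and 4.4]{bu-sch} and used as a black box throughout Sections~\ref{se:prl}--\ref{se:proofmain}. There is therefore no proof in the present paper to compare your attempt against.

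That said, your strategy---Chern--Weil for weakly holomorphic projections plus a compactness argument for semi\-stability, and Uhlenbeck--Yau regularity for the subbundle claim---is the standard framework and is almost certainly close in spirit to what appears in \cite{bu-sch}. Two technical caveats are worth flagging. First, the weakly holomorphic projection attached to a saturated subsheaf lives a priori only in $L^2_1$, not $L^p_1$; the compactness step should be run there, using the pointwise bound $|\pi|\le 1$ together with the $L^2$ bound on $\ol\pt_a\pi$ that Chern--Weil supplies. Your claim that ``the Chern--Weil identity and the inequality above control their $L^p_1$ norms'' is not justified as stated. Second, and more seriously, your part~(2) argument does not close: the equation $(1-\pi_A)\,\ol\pt_a\pi_A=0$ together with $\pi_A^2=\pi_A$ yields smoothness of $\pi_A$ only off a codimension-two set---that is exactly the content of Uhlenbeck--Yau regularity---and elliptic bootstrapping alone does not remove that singular locus. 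When $d_a$ is \emph{exactly} Hermite--Einstein, the Chern--Weil identity with $\mu(A)=\mu(E)$ forces $\ol\pt_a\pi_A=0$ identically, so $\pi_A$ is parallel and $A$ is a subbundle; but for a mere $L^p_1$-perturbation of $d_0$ that vanishing fails, and a further argument (typically a limiting argument back to subbundles of the polystable $E_0$) is needed. This is precisely where Prop.~4.4 of \cite{bu-sch} does real work, and your sketch skips it.
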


According to \cite[Thm.~4.7]{bu-sch} with $d_a = d_0 + a$ in the standard form the following hold for any endomorphism $\sigma\in A^{0,0}(\mathrm{End(E_h)})$ such that $\ol\pt_a\sigma=0$:
\begin{eqnarray}
  d_0\sigma &=& 0 \label{eq:d0sig} \\
   {[}\alpha,\sigma ] &=& 0  \label{eq:alpcomm}\\
   {[}\beta,\sigma ] &=& 0. \label{eq:bcomm}
\end{eqnarray}
(In particular $\sigma$ is holomorphic with respect to the structure $\db_0$ defining $E_0$.) Furthermore, if $m(\alpha)=0$, then
\begin{equation}\label{eq:alpsig}
  [\alpha,\sigma^*]=0=[\beta,\sigma^*].
\end{equation}

Note that for the explicitly constructed family over $S$ all integrable semi-connections are of the special form \eqref{eq:spfo}. In general, when dealing with holomorphic families over arbitrary base spaces $W$ say, being induced by a base change morphism $W\to S$, the initial family of integrable semi-connections parameterized by $W$ is only complex-gauge equivalent to the pull-back of those in special form.

\section{General Case}\label{se:geca}
In the sequel \jh\ filtrations of holomorphic structures on $\rm E_h$ in the above sense are studied. It is important that in this case, namely for bundles close to $E_0$, \jh\ filtrations consist of subbundles rather than coherent sheaves (see {\cite[\S9]{bu-sch}} and Sec.~\ref{se:proofmain}). By definition, all occurring subbundles will have the same slope and quotients of successive terms are stable. Families of filtrations will play a central role.

\subsection{Deformations of polystable bundles}\label{su:defpol} Let again $E_0$ be a polystable bundle, and $E=E_\alpha$ be a polystable bundle close to $E_0$ in the sense of Sec.~\ref{se:prl}. Let $A\subset E$ be a holomorphic subbundle of the same slope.
Then $A$ is necessarily polystable (as can be seen by considering an Hermite-Einstein connection on $E$ and the connection it induces on $A$), and there is a splitting $E=A\oplus B$ for some other polystable subbundle $B$ of $E$.

Let $\sigma$ be the endomorphism of $E$ that is the identity on $A$ and zero on $B$. Then $\db_a\sigma=0$, so by \eqref{eq:d0sig}, \eqref{eq:alpcomm} and \eqref{eq:bcomm}, $d_0\sigma=0$ and $\sigma$ commutes with $\alpha$ and with $\beta$. Consequently,
the polystable bundle $E_0$ splits into a direct sum of polystable subbundles $E_0=A_0\oplus B_0$, where $A_0$ (resp.\ $B_0$) is the hermitian bundle underlying $A$
(resp.\ $B$)  equipped with the restriction of $d_0$. Of course, $A_0, B_0$ need not be stable even if $A$ or $B$ is stable.

If $m$ is the moment map for the action of $\Gamma= Aut(E_0)$ on $H^{0,1}$ \eqref{eq:mom} and $m(\alpha) = 0$, then more can be said.  In this case, from \eqref{eq:alpsig} it follows that $d_a\sigma=0$, and since $\sigma$ is injective on the image of $\sigma^*$, the orthogonal projection $\sigma(\sigma^*\sigma)^{-1}\sigma^*$
of $\rm E_h$ onto the hermitian bundle underlying $A$ is covariantly constant with respect to $d_a$. It is also covariantly constant with respect to $d_0$ and commutes with $\alpha$ and with $\beta$. Replacing $\sigma$ by this orthogonal (and holomorphic) projection yields the following proposition.

\begin{proposition}\label{pr:sigmanew}
Let $E=E_{\alpha}$ be a polystable bundle as above with $m(\alpha)=0$. Let $A\subset E$ be a holomorphic subbundle of the same slope. Then orthogonal projection onto $A$ followed by inclusion into $E$ is a self-adjoint endomorphism $\sigma$ of $\rm E_h$ satisfying $\sigma^2=\sigma$, $d_a\sigma=0$, $d_0\sigma=0$, $[\alpha,\sigma]=0$, and $[\beta,\sigma]=0$. Thus there are compatible holomorphic splittings $E=A\oplus B$ and $E_0=A_0\oplus B_0$ which are orthogonal with respect to the underlying hermitian
metric $h$ on $\rm E_h$.
\end{proposition}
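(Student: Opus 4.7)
The plan is to leverage the paragraph immediately preceding the proposition: one starts from the (generally not self-adjoint) algebraic projection onto $A$ that is produced there, and then symmetrizes it with respect to the hermitian metric $h$ in such a way that all the vanishing commutators already established are preserved.

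More concretely, I would fix the splitting $E=A\oplus B$ coming from polystability of $E$ and let $\sigma_0$ denote the projection onto $A$ along $B$. The paragraph above shows that $\db_a\sigma_0=0$, and then \eqref{eq:d0sig}, \eqref{eq:alpcomm}, \eqref{eq:bcomm} yield $d_0\sigma_0=0$ together with $[\alpha,\sigma_0]=0=[\beta,\sigma_0]$. The hypothesis $m(\alpha)=0$ enters through \eqref{eq:alpsig}, which gives the dual commutators $[\alpha,\sigma_0^*]=0=[\beta,\sigma_0^*]$; since $d_0$ is a unitary connection, $d_0\sigma_0=0$ dualizes to $d_0\sigma_0^*=0$, and combining these with $a'=-(a'')^*$ forces $d_a\sigma_0=0$ as well.

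The next step is to replace $\sigma_0$ by the orthogonal projection $\sigma:=\sigma_0(\sigma_0^*\sigma_0)^{-1}\sigma_0^*$. Because $\ker\sigma_0=B$, the endomorphism $\sigma_0^*\sigma_0$ is strictly positive on $B^\perp$, and $\sigma_0^*$ takes values in $B^\perp$, so the composition is well defined; a short algebraic check gives $\sigma=\sigma^*$, $\sigma^2=\sigma$ and $\im(\sigma)=A$. The crucial observation is that each of $d_0$, $d_a$, $\mathrm{ad}(\alpha)$ and $\mathrm{ad}(\beta)$ is a derivation that, by the first step, annihilates both $\sigma_0$ and $\sigma_0^*$; by the Leibniz rule it therefore annihilates every smooth algebraic expression in $\sigma_0$ and $\sigma_0^*$, and in particular it annihilates $\sigma$. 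Setting $B:=\ker\sigma=A^\perp$ in $\rm E_h$ then yields the claimed $h$-orthogonal holomorphic splittings $E=A\oplus B$ and $E_0=A_0\oplus B_0$ compatible with both $d_0$ and $d_a$.

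The step I expect to be most delicate is giving precise meaning to $(\sigma_0^*\sigma_0)^{-1}$: this endomorphism is invertible only on $B^\perp$, so one must either restrict attention to $B^\perp$ throughout or else add a regularizing term (for instance the orthogonal projection onto $B$) to obtain a globally invertible smooth endomorphism of $\rm E_h$ to which the derivation argument applies. Once this formal point is handled, the remainder is a mechanical application of the Leibniz rule, with all the hard analytic input \eqref{eq:d0sig}--\eqref{eq:alpsig} already furnished by \cite{bu-sch}.
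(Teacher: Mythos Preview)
Your proposal is correct and follows essentially the same route as the paper: start from the algebraic projection $\sigma_0$ onto $A$ along $B$, use \eqref{eq:d0sig}--\eqref{eq:bcomm} and then \eqref{eq:alpsig} (under $m(\alpha)=0$) to make both $\sigma_0$ and $\sigma_0^*$ parallel for $d_0$, $d_a$ and commuting with $\alpha,\beta$, and finally pass to the orthogonal projection $\sigma_0(\sigma_0^*\sigma_0)^{-1}\sigma_0^*$. The paper handles the invertibility issue exactly as in your first option, by observing that $\sigma_0$ is injective on $\im\sigma_0^*\subset B^\perp$; your alternative of adding ``the orthogonal projection onto $B$'' to regularize is slightly circular (that projection is $1-\sigma$, which is what you are constructing), so stick with the restriction argument or use $(\sigma_0^*\sigma_0+\epsilon)^{-1}$ and let $\epsilon\to 0$.
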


\begin{corollary}\label{co:numsum}
The number of stable summands of $E_\alpha$ is less than or equal to the
number of stable summands of $E_0$.
\end{corollary}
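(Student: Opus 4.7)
The plan is to apply Proposition~\ref{pr:sigmanew} to each stable summand of $E_\alpha$, producing a compatible orthogonal decomposition of $E_0$, and then to observe that each resulting summand of $E_0$ is polystable and hence contributes at least one stable summand.

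Since the number of stable summands depends only on the isomorphism class of $E_\alpha$, by the Kempf-Ness characterisation recalled in Section~\ref{se:prl} one may replace $\alpha$ by a point in its $\Gamma$-orbit so as to assume $m(\alpha)=0$. Write $E_\alpha=V_1\oplus\cdots\oplus V_k$ as an $h$-orthogonal direct sum of stable subbundles; such a decomposition exists because the stable summands of a polystable bundle equipped with a Hermite-Einstein metric may always be taken mutually orthogonal (up to a unitary change of basis within each isotypic component). For each $j$, Proposition~\ref{pr:sigmanew} applied to $V_j\subset E_\alpha$ yields a self-adjoint projection $\sigma_j$ of $\rm E_h$ onto the hermitian bundle underlying $V_j$, satisfying $\sigma_j^2=\sigma_j$, $d_0\sigma_j=0$, $[\alpha,\sigma_j]=0$ and $[\beta,\sigma_j]=0$.

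Because each $\sigma_j$ is the $h$-orthogonal projection onto an $h$-orthogonal summand of $E_\alpha$, one automatically has $\sigma_j\sigma_l=\delta_{jl}\sigma_j$ and $\sum_j\sigma_j=\mathrm{id}$; combined with the commutation $d_0\sigma_j=0$, this produces a $d_0$-holomorphic $h$-orthogonal splitting $E_0=V_{1,0}\oplus\cdots\oplus V_{k,0}$, where $V_{j,0}$ carries $\db_0$ restricted to the image of $\sigma_j$. Each $V_{j,0}$ is a non-zero holomorphic direct summand of the polystable bundle $E_0$ that is $h$-orthogonal to its complement, so the restriction of $h$ remains Hermite-Einstein on $V_{j,0}$; hence $V_{j,0}$ is polystable and contributes at least one stable summand. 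Summing over $j$ yields the stated inequality. The only delicate point is verifying that the $\sigma_j$ produced by $k$ separate applications of Proposition~\ref{pr:sigmanew} really assemble into a single system of mutually orthogonal projections summing to the identity, and this works cleanly precisely because the stable decomposition of $E_\alpha$ may be chosen $h$-orthogonal from the outset.
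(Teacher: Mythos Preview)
Your argument is correct and is exactly the proof the paper leaves implicit after Proposition~\ref{pr:sigmanew}: produce a $d_0$-parallel self-adjoint projection for each stable summand of $E_\alpha$, obtain a compatible $h$-orthogonal splitting of $E_0$ into $k$ nonzero polystable pieces, and count.

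One small imprecision is worth flagging. The fixed metric $h$ is not known to be Hermite--Einstein on $E_\alpha$ even when $m(\alpha)=0$ (the moment map condition controls only the $H^{0,0}$-component of the curvature), so your appeal to ``the stable summands of a polystable bundle equipped with a Hermite--Einstein metric'' is not quite on the nose. The existence of an $h$-orthogonal stable decomposition of $E_\alpha$ is nevertheless immediate from the tools already in hand: either iterate Proposition~\ref{pr:sigmanew} itself (pick a stable summand $V_1\subset E_\alpha$, get the holomorphic $h$-orthogonal splitting $E_\alpha=V_1\oplus V_1^\perp$, and repeat inside the polystable bundle $V_1^\perp$, noting that the resulting projections $\sigma_j$ have $\sigma_j\sigma_l=0$ for $j\ne l$ since $V_l\subset V_j^\perp$), or observe from \eqref{eq:alpsig} that when $m(\alpha)=0$ the algebra of $\ol\pt_a$-holomorphic endomorphisms is closed under $\sigma\mapsto\sigma^*$, so a complete system of commuting self-adjoint idempotents exists. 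With that adjustment the proof is complete and matches the paper.
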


Given $A\subset E$ as in Proposition~\ref{pr:sigmanew} the polystable subbundle $A_0\subset E_0$ is determined by $A$.  The bundle $E_0$ splits as a direct sum of stable summands, each of the same slope. The holomorphic structure of each summand is that induced by the reducible connection $d_0$ restricted to the corresponding $d_0$-invariant subbundle.
If $E_1,\dots, E_m$ are the distinct stable components of $E_0$, then $E_0=\bigoplus_{j=1}^mV_j\otimes E_j$ where $V_j$ is a hermitian vector space of dimension equal to the multiplicity of $E_j$ in $E_0$. Then $\Gamma := Aut(E_0) = \Pi_{j=1}^m GL(V_j)$ and $H^{0,1}=\bigoplus_{j,k=1}^m \big(Hom(V_j,V_k) \otimes H^{0,1}(Hom(E_j,E_k))\big)$.

If $A_0\subset E_0$ is a polystable subbundle with $\mu(A_0) = \mu(E_0)$, then each stable component of $A_0$ must be isomorphic to a stable component of $E_0$. Thus $A_0=\bigoplus_{j=1}^m W_j\otimes E_j$ where $W_j\subset V_j$ is a subspace (possibly $0$). For a hermitian vector space $V$ such as one of the spaces  $V_j$, the group $U(V)\subset GL(V)$ acts transitively on the Grassmannian $\mathrm{Grass}_k(V)$ of $k$-dimensional subspaces of $V$, and therefore the group $U(\Gamma)\subset\Gamma$ acts transitively on the set of polystable subbundles of $E_0$ isomorphic to $A_0$.
The conclusion is the following.
\begin{proposition}\label{pr:aut}
  Two isomorphic subbundles $A_0\subset E_0$ and $A'_0 \subset E_0$ of the same slope differ by a unitary automorphism of $E_0$. So the isomorphism classes of such subbundles are determined by the dimensions of the spaces $W_j$.
\end{proposition}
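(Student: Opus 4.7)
The plan is to leverage the isotypic decomposition already displayed in the text, namely $E_0 = \bigoplus_{j=1}^m V_j \otimes E_j$ with pairwise non-isomorphic stable components $E_j$, together with the transitivity of unitary groups on Grassmannians, to read off both conclusions almost directly.

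First I would argue the second assertion (classification by dimensions). If $A_0 \subset E_0$ is a polystable subbundle of the same slope, the discussion preceding the proposition shows that $A_0 = \bigoplus_{j=1}^m W_j \otimes E_j$ for some subspaces $W_j \subset V_j$. Since the $E_j$ are mutually non-isomorphic stable bundles of the same slope, any bundle isomorphism between $\bigoplus W_j \otimes E_j$ and $\bigoplus W'_j \otimes E_j$ must restrict to an isomorphism of each isotypic factor $W_j \otimes E_j \cong W'_j \otimes E_j$, which forces $\dim W_j = \dim W'_j$ for every $j$. Conversely the abstract isomorphism class of $A_0$ is visibly determined by the tuple $(\dim W_j)_{j=1}^m$, so the isomorphism classes of such subbundles are in bijection with these tuples.

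Next I would deduce the first assertion from the dimension equality just obtained. Given $A_0, A'_0 \subset E_0$ with $\dim W_j = \dim W'_j =: k_j$, transitivity of $U(V_j)$ on $\mathrm{Grass}_{k_j}(V_j)$ (noted in the text) produces unitary maps $u_j \in U(V_j)$ with $u_j(W_j) = W'_j$. Bundling them into $u := (u_1,\dots,u_m) \in U(\Gamma) \subset \Gamma = \prod_j GL(V_j)$, acting as $u_j \otimes \mathrm{id}_{E_j}$ on the $j$-th isotypic summand, yields a unitary automorphism of $E_0$ that carries $A_0$ to $A'_0$.

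Since every technical ingredient (the isotypic shape of $A_0$, the group structure of $\Gamma$, and the unitary transitivity on Grassmannians) has already been established in the paragraphs immediately above the proposition, I do not foresee a genuine obstacle; the only point requiring a line of justification is the descent of an abstract isomorphism $A_0 \cong A'_0$ to isomorphisms of isotypic components, which is a standard consequence of Schur's lemma for stable bundles (the space of homomorphisms between non-isomorphic stable bundles of equal slope is zero).
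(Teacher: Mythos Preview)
Your proposal is correct and follows essentially the same route as the paper: the argument in the paragraph immediately preceding the proposition already establishes the isotypic form $A_0=\bigoplus_j W_j\otimes E_j$ and invokes transitivity of $U(V_j)$ on $\mathrm{Grass}_{k_j}(V_j)$ to conclude that $U(\Gamma)$ acts transitively on subbundles of a given isomorphism type. You merely reorder the two assertions and spell out the Schur-lemma step that the paper leaves implicit.
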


Now the construction from Section~\ref{se:prl} can be compared to the analogous
construction of a semi-universal deformation of $A_0$.

The harmonic spaces of $(p,q)$-forms on $X$ with values in a bundle
$\cF$ will be denoted by $H^{p,q}(\cF)$. The orthogonal decomposition
$E_0=A_0\oplus B_0$ yields an orthogonal decomposition of $H^{0,q}(End(E_0))$.
The above construction of a semi-universal deformation of $A_0$ given
in the previous section will be applied, using subscripts $A_0$.

Let $E=E_\alpha$ be polystable with $m(\alpha)=0$ and let  $A \subset E$ and $A_0\subset E_0$ be as in Proposition~\ref{pr:sigmanew}.

\begin{lemma}\label{le:Psi}
\begin{itemize}
  \item[(i)]
The following diagram commutes:
\begin{equation}\label{eq:diagram}
  \begin{array}{ccc}
   H^{0,1}(End(A_0)) & \stackrel{\iota}{\longrightarrow} & H^{0,1}(End(E_0))\\
   \cup && \cup\\
   U_{A_0}&\longrightarrow& U\\
   \strut\hskip8mm$\mbox{$\xdownarrow{5mm}$ ${\Psi_{A_0}}$}$ &&
   \strut\hskip6mm$\mbox{$\xdownarrow{5mm}$ ${\Psi}$}$\\
   H^{0,2}(End(A_0)) & \stackrel{}{\longrightarrow} & H^{0,2}(End(E_0))
  \end{array}
\end{equation}
\item[(ii)] The sets $U,U_{A_0}$ can be chosen so
that $\iota(U_{A_0})= U \cap \iota(H^{0,1}(End(A_0)))$ holds
and so that a
parameter space $S_{A_0}= \Psi^{-1}_{A_0}(0)$ for a semi-universal
deformation of $A_0$ satisfies $\iota(S_{A_0})=S\cap \iota(H^{0,1}(End(A_0)))$,
where $S=\Psi^{-1}(0)$ is a parameter space for a semi-universal
deformation of $E_0$.

\item[(iii)] Using the obvious notation,
$$
m^{-1}(0)\cap \iota (H^{0,1}(End(A_0)))=\iota(m^{-1}_{A_0}(0)).
$$
\end{itemize}
\end{lemma}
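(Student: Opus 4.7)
The plan rests on the orthogonal, $d_0$-invariant decomposition $E_0=A_0\oplus B_0$ furnished by Proposition~\ref{pr:sigmanew}. It induces an $L^2$-orthogonal splitting
\[
End(E_h) = End(A_0)\oplus Hom(A_0,B_0)\oplus Hom(B_0,A_0)\oplus End(B_0)
\]
that is preserved by $d_0$ (hence by $\db_0$, by $\db_0^*$, and by harmonic projection onto $H^{p,q}$), and whose first summand is closed under both composition and the $*$-operation. The inclusion $\iota$ corresponds to this first summand, and $\db_0$-harmonic forms with values in $End(A_0)$ inject into harmonics on $End(E_0)$ accordingly.

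For part~(i), I would take $\alpha\in H^{0,1}(End(A_0))$, identify it with $\iota(\alpha)$, and run the implicit function construction from Section~\ref{se:prl}. Because $\iota(\alpha)$ is supported in the $End(A_0)$-summand and $End(A_0)$ is closed under $\wedge$-composition, the nonlinear equation $\db_0^*(\db_0 a'' + a''\we a'')=0$ restricts cleanly to the $End(A_0)$-component; the $\beta_{A_0}$ produced by the $A_0$-version of the construction therefore yields an $a''$ that already solves the full equation on $E_0$. Uniqueness of the small $\beta$ orthogonal to $\ker\db_0^*$ then forces $\beta$ to lie in $A^{0,2}(End(A_0))$, and since $\Pi^{0,2}$ preserves the splitting, $\Psi(\iota(\alpha)) = \iota(\Psi_{A_0}(\alpha))$, i.e.\ the diagram commutes.

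For part~(ii), I would shrink $U$ so that the implicit function construction is valid, then take $U_{A_0}:=\iota^{-1}(U)$; by construction this gives $\iota(U_{A_0}) = U\cap\iota(H^{0,1}(End(A_0)))$. The equality $\iota(S_{A_0}) = S\cap\iota(H^{0,1}(End(A_0)))$ is then immediate from (i): for $\iota(\alpha)\in U$ lying in the image of $\iota$, the diagram gives $\Psi(\iota(\alpha))=0$ iff $\Psi_{A_0}(\alpha)=0$, i.e.\ $\alpha\in S_{A_0}$. That $S_{A_0}$ really parametrises a semi-universal deformation of $A_0$ is just the output of the construction of Section~\ref{se:prl} applied with $A_0$ in place of $E_0$.

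For part~(iii), the formula \eqref{eq:mom} reads $m(\alpha)=\Pi^{0,0}i\Lambda(\alpha\we\alpha^*+\alpha^*\we\alpha)$, and the orthogonal splitting above is $*$-invariant. So if $\alpha=\iota(\alpha_0)$, both $\alpha\we\alpha^*$ and $\alpha^*\we\alpha$ lie in $End(A_0)$, and the same persists after applying $\Pi^{0,0}$, which respects the splitting. Hence $m(\alpha)=\iota(m_{A_0}(\alpha_0))$, and $m(\alpha)=0$ iff $m_{A_0}(\alpha_0)=0$. The main technical point throughout is the uniqueness argument in (i): one must verify that every piece of the machinery---$\db_0$, $\db_0^*$, harmonic projection, and $\wedge$-composition---respects the four-fold splitting, which ultimately reduces to the $d_0$-invariance of $A_0$ and $B_0$ already provided by Proposition~\ref{pr:sigmanew}.
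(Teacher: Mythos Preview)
Your proposal is correct and follows essentially the same route as the paper: both rely on the $d_0$-invariant orthogonal decomposition $E_0=A_0\oplus B_0$ (Proposition~\ref{pr:sigmanew}), the resulting orthogonal splitting of the harmonic spaces $H^{0,q}(End(E_0))$, and the fact that the implicit-function construction of $\beta$ from $\alpha$ respects this splitting because $\db_0$, $\db_0^*$, $\Pi^{0,q}$, and wedge-composition all do. Your uniqueness argument for $\beta$ is exactly the mechanism the paper leaves implicit when it cites \eqref{eq:d0sig}--\eqref{eq:bcomm} together with ``the construction of the functions $\Psi_{E_0},\Psi_{A_0}$''; you have simply unpacked what the paper states in one line.
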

The {\em proof} follows from the orthogonal splitting of cohomology
groups, \eqref{eq:d0sig}, \eqref{eq:alpcomm}, and \eqref{eq:bcomm}
and the construction of the functions
$\Psi_{E_0},\Psi_{A_0}$, which are equivariant with respect to the
actions of the respective automorphism groups. \qed

Later the statement will be needed simultaneously for several
subbundles of $E_0$ corresponding to a filtration, namely the fact
that $U_{A_0}$ is determined by $U\subset H^{0,1}(End(E_0))$ and $H^{0,1}(End(A_0))$,  and not by the choice of $A_0$.

\subsection{Relative extension sheaves}\label{se:relext}
For the algebraic case refer to \cite[EGA III 7.7.8, 7.7.9]{gro}; the analytic case is based upon \cite{gra}. In the present context again all parameter spaces are reduced by assumption.
\begin{definition}\label{de:relex}
Let $f:\cX\to S$ be a proper, flat, holomorphic map of complex spaces, and $\cF$, $\cG$ be coherent sheaves on $\cX$. Then the presheaf
$$
S\supset W \mapsto Ext^q(f^{-1}(W); \cF, \cG)
$$
of equivalence classes of $q$-extensions of $\cF|f^{-1}(W)$ by $\cG|f^{-1}(W)$ determines the relative Ext-sheaves
$$
\cE xt^q(f; \cF, \cG)(W).
$$
Given a base change map $g:T\to S$ with $f':X_T=\cX\times_S T \to T$, and $g':\cX_T \to \cX$, the natural base change morphism is denoted by
\begin{equation}\label{eq:bch}
g^*\cE xt^q(f; \cF, \cG) \to \cE xt^q(f';\cF_T, \cG_T)
\end{equation}
with $\cF_T= g'^*\cF$ and $\cG_T= g'^*\cG$.
\end{definition}
In order to avoid unnecessary complications the sheaf $\cF$ is usually assumed to be $S$-flat.

A spectral sequence argument implies that sections of a relative ext-sheaf over a Stein space $W$ correspond to elements of the corresponding ext group over $f^{-1}(W)$.

\begin{remark}\label{re:grau}
The base change map is an isomorphism under extra assumptions (Grauert Base Change Theorem) which imply the freeness of the relative ext-sheaves.  However, in our applications these \ul{cannot} be made.
\end{remark}

By \cite[Satz~1]{bps} (assuming condition {\rm (i)} of the theorem, which will be satisfied in our applications) there exists locally with respect to $S$ a complex $\cP^\tbullet$ of finite free $\cO_S$-modules bounded to the right and isomorphisms
\begin{equation}\label{eq:rep}
  \cE xt^q(f'; \cF_T, \cG_T) \stackrel{\sim}{\longrightarrow} \cH^q(g^* \cP^\tbullet)
\end{equation}
that are compatible with the base change morphisms \eqref{eq:bch} and
\begin{equation}\label{eq:bch2}
g^*(\cH^q(\cP^\tbullet)) \to \cH^q(g^*\cP^\tbullet).
\end{equation}
Of particular interest are the base change morphisms $\{s\} \to S$ for points $s\in S$.

For any $q$ denote the support of the respective sheaf $\cH^q(\cP^\tbullet)$ by $S_q \subset S$, which is a closed analytic subset. The linear space over $S_q$ associated to $\cH^q(\cP^\tbullet)$ in the sense of \cite[Sec.~1.4]{fi} will be denoted by $V_q \to S_q \subset S$. (Here the space $S$ may have to be replaced by a neighborhood of a prespecified point.)

The above definition will be applied to the case where $\cX=X \times S$ and $f$ is the projection onto $S$. Let $\cF_s=\cF| X\times\{s\}$ and $\cG_s=\cG| X\times\{s\}$ for $s\in S$; then
\begin{equation}\label{eq:Exts}
  Ext^q(X;\cF_s,\cG_s) = V_{q,s}
\end{equation}
where $V_{q,s}$ denotes the fiber of $s$ of the linear space $V_q\to S$. Here the description of the extension groups in terms of the cohomology \eqref{eq:rep} of a bounded complex of free $\cO_S$-modules is crucial.

Altogether there is the following application for the cases $q=0,1$:
\begin{proposition}\label{pr:homext}
Let $S$ be Stein. Then the following hold:
\begin{itemize}
\item[(i)]
There exists a universal homomorphism
$$
\Phi: \cF_{V_0} \to \cG_{V_0}
$$
over $X\times V_0$ such that for all $s\in S_0\subset S$ the fiber of $V_0\to S_0$  consists of all homomorphisms $\cF|X\times\{s\}$ to $\cG|X\times\{s\}$.
\item[(ii)]
There exists a universal extension
$$
0 \to \cF_{V_1} \to \cE \to \cG_{V_1} \to 0
$$
over $X\times V_1$ such that for all $s\in S_1\subset S$ the fiber of $V_1\to S_1$ consists of all extensions of $\cF_s$ by $\cG_s$ up to equivalence.
\end{itemize}
\end{proposition}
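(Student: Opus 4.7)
The plan is to obtain $\Phi$ and $\cE$ as \emph{tautological objects} on $V_0$ and $V_1$ respectively, by combining the fibre identification \eqref{eq:Exts} with the base change morphism \eqref{eq:bch}. Writing $g_q: V_q \to S_q \subset S$ for the structural map of the linear space and $g_q': X \times V_q \to X \times S$ for the induced map, $V_q$ is itself Stein (being a linear space of a coherent sheaf over the Stein base $S$), so the spectral sequence observation following Definition~\ref{de:relex} gives the identification
\begin{equation*}
  \Gamma\bigl(V_q,\,\cE xt^q(f'; \cF_{V_q}, \cG_{V_q})\bigr) \;=\; Ext^q\bigl(X \times V_q;\, \cF_{V_q},\, \cG_{V_q}\bigr).
\end{equation*}
Via \eqref{eq:rep} applied to $g_q$, the left-hand side is naturally identified with $\Gamma(V_q, \cH^q(g_q^*\cP^\tbullet))$, so it suffices to exhibit a canonical section of the latter.

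Such a section is produced by the universal property of the linear space $V_q \to S_q$: the identity map $V_q \to V_q$, viewed over $S_q$, is a tautological section of the pullback $V_q \times_{S_q} V_q \to V_q$, and through the identification of fibres of $V_q$ with Ext groups recorded in \eqref{eq:Exts} this gives a canonical element $\tau_q \in \Gamma(V_q, g_q^*\cH^q(\cP^\tbullet))$ whose value at each point $v \in V_{q,s}$ is $v$ itself, regarded as an element of $Ext^q(X;\cF_s,\cG_s)$. Applying the base change morphism \eqref{eq:bch2} transports $\tau_q$ to a section $\widetilde\tau_q$ of $\cH^q(g_q^*\cP^\tbullet)$ with the same fibre values. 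For $q = 0$ the corresponding global element of $\Hom_{X \times V_0}(\cF_{V_0}, \cG_{V_0})$ is the universal homomorphism $\Phi$ of (i); for $q = 1$ the class in $Ext^1(X \times V_1; \cF_{V_1}, \cG_{V_1})$ is represented by an extension $0 \to \cF_{V_1} \to \cE \to \cG_{V_1} \to 0$, unique up to equivalence of extensions, yielding (ii). Fibre-wise universality --- that $\Phi|_{\{v\} \times X}$ is the homomorphism (resp.\ that $\cE|_{\{v\} \times X}$ represents the extension class) labelled by $v \in V_{q,s}$ --- follows from the tautological nature of $\widetilde\tau_q$ together with functoriality of \eqref{eq:bch} applied along the closed immersion $\{v\} \hookrightarrow V_q$.

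The principal technical subtlety is the failure of Grauert-type base change flagged in Remark~\ref{re:grau}: \eqref{eq:bch2} is not in general an isomorphism, so it would not be enough to try to construct $\Phi$ or $\cE$ by pulling back a global object on $S$. The argument circumvents this by building the tautological section directly over the total space $V_q$, where the linear-space structure together with \eqref{eq:Exts} forces the correct fibre values; \eqref{eq:bch2} is then invoked only in the one direction required to land in the relative Ext sheaf. Making the bookkeeping precise --- in particular, reconciling the Fischer-style linear-space description of $V_q$ with the cohomology of $g_q^*\cP^\tbullet$ --- is the step where the availability of the bounded free resolution \eqref{eq:rep} in the absence of flatness hypotheses does the essential work, and is the main place where care is required.
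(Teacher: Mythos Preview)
Your argument is correct and is precisely the elaboration the paper has in mind: in the text, Proposition~\ref{pr:homext} is not given a separate proof but is presented as a direct ``application'' of the preceding framework (the complex $\cP^\tbullet$ in \eqref{eq:rep}, the base change maps \eqref{eq:bch}--\eqref{eq:bch2}, and the fibre identification \eqref{eq:Exts}), together with the spectral sequence remark after Definition~\ref{de:relex}. Your tautological-section construction over $V_q$ is the standard way to make this explicit and matches the paper's intended reasoning; the only thing to watch is the variance convention for Fischer's linear space (so that the fibre of $V_q$ really is $Ext^q$ rather than its dual), but the paper has already fixed this in \eqref{eq:Exts}, so your use of it is consistent.
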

The statement of Proposition~\ref{pr:aut} has another implication. The (restricted) action of the group $\Gamma=Aut(E_0)$  on the base space $S$ of a semi-universal deformation of $E_0$ in the sense of \cite{bu-sch} plays a central role in the construction of a coarse moduli space. Even though the choice of an isomorphism of $E_0$ and the distinguished fiber of a semi-universal deformation changes the latter, the following fact holds.

Let $A_0, A'_0\subset E_0$ be isomorphic subbundles. Then by Proposition~\ref{pr:aut} these subbundles differ by a unitary automorphism  $\delta$ of $E_0$ which will be kept fixed in the sequel. We recall the implication for the  action of $\Gamma$ on the germ $(S,0)$ of $S$ at $0$. This action proven in \cite[Sec.~2.1]{b-s} can be summarized in terms of a diagram:
\begin{equation*}
\scalebox{.75}{
  \xymatrix{
  & E_0 \ar[rr]\ar[dd]& &\cE\ar[dd] \\
  E_0 \ar[rr] \ar[ur]^\varphi \ar[dd] & & \cE \ar[ur]^\psi \ar[dd] &  \\
  & 0 \ar[rr]& & S  \\
  0 \ar[rr] \ar@{=}[ru] & &S \ar[ur]^\Phi &
  }
  }
\end{equation*}
Here the map $\psi$ is an isomorphism from $\cE$ to $\Phi^*\cE$, and the representation $\Gamma \to Aut(S,0)$ is given by $\varphi \mapsto \Phi$. The application of $\delta$ amounts to the conjugate action of $\varphi\mapsto    \delta^{-1}\circ\varphi\circ\delta \mapsto \Delta^{-1}\circ\Phi\circ\Delta$, where $\Delta$ is the image of $\delta$ under the former representation.

It is also useful to consider the action of $Aut(E_0)$ on extensions of the form $0\to A_0 \to E_0 \to B_0 \to 0$ (which are equivalent to the trivial extension), in particular in view of Proposition~\ref{pr:aut}, where an automorphism of $E_0$ is determined (non-uniquely) by an isomorphism of subbundles $A_0$ and $A'_0$.
\begin{lemma}\label{le:iso}
The choice of $\delta$ as above does not affect the  pointwise quotient nor the analytic GIT-quotient of $S$. It induces an equivalence of the extension $0\to A_0 \to E_0 \to B_0 \to 0$.
\end{lemma}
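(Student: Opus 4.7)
The plan is to treat the two assertions separately. Both are elementary: the first reduces to the standard fact that an action obtained from another by conjugation through an element of the acting group itself has the same orbits and orbit closures, hence the same pointwise and analytic GIT quotients; the second is the tautological observation that an automorphism of $E_0$ carrying $A_0$ onto $A'_0 = \delta(A_0)$ serves as the middle arrow of a morphism of short exact sequences.

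For the invariance statement I would argue as follows. Denote by $\rho\colon \Gamma \to Aut(S,0)$ the representation from the displayed diagram and write $\Delta = \rho(\delta)$. Any other admissible choice $\delta'$ of unitary automorphism of $E_0$ identifying $A_0$ with $A'_0$ differs from $\delta$ by multiplication by an element of $\Gamma$, so the induced representations $\rho$ and $\rho'$ are intertwined by an element of $\rho(\Gamma)$; that is, $\rho'(\varphi) = \Delta^{-1}\rho(\varphi)\Delta$ for all $\varphi\in\Gamma$. A direct computation then gives
\begin{equation*}
\rho'(\Gamma)\cdot s \;=\; \Delta^{-1}\bigl(\rho(\Gamma)\cdot\Delta(s)\bigr) \;=\; \rho(\Gamma)\cdot s
\end{equation*}
for every $s\in S$, since $\Delta \in \rho(\Gamma)$ already preserves the orbit partition. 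The same calculation applied to closures shows that the equivalence relation ``lying in the closure of a common orbit'' is unaffected as well. Invoking the construction of the analytic GIT-quotient in \cite[\S 2]{b-s}, both the pointwise quotient and the analytic GIT quotient $S \san2 G$ are therefore independent of the choice of $\delta$.

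For the equivalence of extensions I would simply exhibit the diagram
\begin{equation*}
\begin{array}{ccccccccc}
0 & \to & A_0 & \to & E_0 & \to & B_0 & \to & 0 \\
& & \downarrow \delta|_{A_0} & & \downarrow \delta & & \downarrow \bar\delta & & \\
0 & \to & A'_0 & \to & E_0 & \to & B'_0 & \to & 0\,,
\end{array}
\end{equation*}
with $A'_0 = \delta(A_0)$, $B'_0 = E_0/A'_0$, and $\bar\delta$ the automorphism induced on the quotient by $\delta$. Both squares commute by construction and every vertical arrow is an isomorphism, yielding the asserted equivalence of extensions; moreover the splitting $E_0 = A_0 \oplus B_0$ is carried to the analogous splitting of the image extension.

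The subtlest step, and the one I would check most carefully, is the assertion that the analytic GIT-quotient constructed in \cite{b-s} depends only on the image subgroup $\rho(\Gamma) \subset Aut(S,0)$ together with its orbit structure and orbit closures, and not on the particular representation $\rho$ chosen. This is implicit in the construction but is precisely the point that must be verified in order to conclude that inner conjugation by $\Delta\in\rho(\Gamma)$ leaves $S\san2 G$ invariant; once this is granted, both halves of the lemma are immediate.
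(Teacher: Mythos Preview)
Your argument is essentially correct and follows the same approach as the paper. The paper actually takes the first assertion as immediate from the discussion preceding the lemma (the conjugate action by $\Delta\in\rho(\Gamma)$ obviously preserves orbits and closures), and only writes out the second assertion via a diagram of short exact sequences; your diagram is the same as theirs, except that the paper emphasizes that $\delta$ is unitary and therefore carries the \emph{orthogonal complement} $B_0$ of $A_0$ onto the orthogonal complement $B'_0$ of $A'_0$, whereas you phrase $\bar\delta$ as the induced map on quotients.

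One expository point worth tightening: the sentence ``any other admissible choice $\delta'$ \ldots\ differs from $\delta$ by multiplication by an element of $\Gamma$'' is a tautology, since $\delta,\delta'\in\Gamma=Aut(E_0)$ already. The content you need is simply that $\Delta=\rho(\delta)$ lies in $\rho(\Gamma)$, so the conjugated action $\varphi\mapsto\Delta^{-1}\rho(\varphi)\Delta$ has the same image subgroup and hence the same orbits and orbit closures; the detour through a second choice $\delta'$ adds nothing. Your displayed computation $\rho'(\Gamma)\cdot s=\rho(\Gamma)\cdot s$ is exactly the right point, and the closing remark about verifying that the analytic GIT-quotient of \cite{b-s} depends only on the orbit/closure data is apt.
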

The second assertion still need proving. It follows from the diagram associated to the embeddings $A_0\hookrightarrow E_0$ and $A'_0 \hookrightarrow E_0$ into the polystable bundle $E_0$ and the induced map on orthogonal complements.
\begin{equation*}
  \xymatrix{
  0 \ar[r]& A_0 \ar[r] \ar[dr] \ar[d]_{\phi|A_0}^*[@]{\!\!\sim} & E_0 \ar[r]\ar[d]_{\phi}^*[@]{\!\!\sim}& B_0 \ar[r]\ar[d]_{\ul\phi}^*[@]{\!\!\sim} & 0 \\
  0  \ar[r] & A'_0 \ar[r]& E_0\ar[r]\ar[ur] & B'_0 \ar[r] & 0
  }
\end{equation*}
which yields an equivalence of extensions of $B_0$ by $A_0$.

\subsection{Choice of parameter spaces}\label{se:cho}
The deformation theoretic approach implies that parameter spaces are primarily defined as germs of complex spaces. In the course of constructions (including relative extension sheaves) these are represented by suitable spaces which possibly need to be replaced by smaller spaces.
This is achieved by shrinking the ball $U$; however, this need only be done a finite number of times by virtue of the fact that there are only finitely many isomorphism classes of subbundles of $E_0$ of the same slope.

The initial object is a polystable bundle $E_0$. The parameter space $S\subset U$ was constructed in \cite[Theorem 3]{b-s} (see Sec.~\ref{se:prl}). The open set $U\subset H^{0,1}$ was later specified as an open ball around zero. For all subbundles $A_0\subset E_0$ (with the same slope as $E_0$) semi-universal deformations exist in the sense of Lemma~\ref{le:Psi}. Here parameter spaces exist as subspaces of the fixed set $U$, which need not be replaced by a smaller neighborhood. The choice of subbundles $A_0\subset E_0$ only depends upon the isomorphism classes of $A_0$ by Proposition~\ref{pr:aut} and Lemma~\ref{le:iso} -- only finitely many such isomorphism classes exist. In a successive way universal spaces of extensions need to be considered. In \eqref{eq:rep} the sheaf of relative extensions is constructed locally as the cohomology of a bounded complex of free sheaves on the base. The support of the extension sheaf (giving rise to a linear space over the support) is a complex subspace. Here finitely many times the initial set $U\supset S$ may need shrinking. Also the case $q=0$ of homomorphisms is being treated in the analogous way (see Proposition~\ref{pr:homext} and Proposition~\ref{pr:polfib} below).  This process also guarantees that the supports of sheaves of the type $\cH^q(\cP^\tbullet)$, $q=0,1$ are closed analytic subspaces such that all irreducible components contain the fibers of $0\in S$. Including all isomorphism classes of subbundles and successive relative extension sheaves,  choices can be made before carrying out the actual construction, there being only a finite number of such choices required. These choices are controlled by specifying $U\subset H^{0,1}$, which determines parameter spaces for families of bundles and extensions.

\subsection{Families of filtrations}
The usefulness of filtrations for deformations of polystable bundles depends heavily upon the fact that destabilizing subsheaves are subbundles (Proposition~\ref{pr:destable}). The theory of deformations of filtrations of holomorphic vector bundles is part of the theory of deformations of multiple vector spaces equipped with linear maps over a compact complex space or manifold.

However, the ``size'' of parameter spaces of semi-universal deformations is of importance (see condition $(*)$ below). Because of this property the following arguments are necessary.

The results of the previous sections will be applied to filtrations. Again the polystable reference bundle is denoted by $E_0$ and the construction of a semi-universal deformation in the sense of Section~\ref{se:prl} is assumed with a parameter space $S\subset U$, in particular $s\in S$ denotes any polystable point with fiber $\cE_s=E=E_\alpha$. Now any filtration
\begin{equation}\label{eq:filtE}
  0 \subsetneq E^1 \subsetneq E^2 \subsetneq \cdots \subsetneq E^k  =E
\end{equation}
of length $k$ is taken, where the subbundles are of the same slope as $E$. In particular the filtration can assumed to be a \jh\ filtration, where the subsequent quotients are all stable bundles.

The process from Proposition~\ref{pr:sigmanew} is applied successively to $E^j$ and its subbundle $E^{j-1}$ beginning with $E=E^k$ yielding a filtration
\begin{equation}\label{eq:filtE_0}
0 \subsetneq E^1_0 \subsetneq E^2_0 \subsetneq \cdots\subsetneq E^k_0 \subsetneq E_0 .
\end{equation}
The aim is to construct a complete deformation of \eqref{eq:filtE_0} such that
\begin{itemize}
\item[$(*)$]
{\em the initially chosen filtration of $E=E_\alpha$ is contained in the respective family as a fiber.}
\end{itemize}
By definition a holomorphic family of filtrations is given by a family
$$
0 \subsetneq \cE^1 \subsetneq \cE^2 \subsetneq\cdots\subsetneq \cE^k
$$
of holomorphic vector bundles over $X\times R$ where $R$ denotes a parameter space $R$ together with an isomorphism of the fiber taken at the distinguished point $0\in R$ and \eqref{eq:filtE_0}.

The case $k=1$ is contained in Section~\ref{se:prl}. Filtrations of length $k=2$ are called ``short filtrations'' and will be treated in the next section.

\subsection{Short filtrations}\label{su:sfilt}
In order to simplify the notation, instead of using superscripts $k$ and $k-1$, the notation
$$
0\subsetneq A_0 \subsetneq  E_0
$$
will be used, given here
$$
0 \subsetneq A \subsetneq E
$$
with $m(\alpha)=0$. Assuming the situation of Proposition~\ref{pr:sigmanew} we have $E_0=A_0\oplus B_0$. Let $\cA$ and $\cB$ be the holomorphic families  of bundles over the parameter spaces $S_{A_0}$ resp.\ $S_{B_0}$ from Lemma~\ref{le:Psi}. Denote by $\cA_{S_{A_0}\times S_{B_0}}$ and $\cB_{S_{A_0}\times S_{B_0}}$ the families of bundles according to Lemma~\ref{le:Psi} pulled back to the common parameter space ${S_{A_0}\times S_{B_0}}$. Then Sect.~\ref{se:relext} provides a parameter space for relative extensions of $\cB_{S_{A_0}\times S_{B_0}}$ by $\cA_{S_{A_0}\times S_{B_0}}$. Denote the associated linear space in the sense of \eqref{eq:Exts} by $V_1 \to  {S_{A_0}\times S_{B_0}}$.
On the complement of the support of $\cH^1(\cP^\tbullet)$ only the trivial extension exists. The universal object is denoted by $0\to\cA_{V_1}\to \cF \to \cB_{V_1} \to 0$, where the associated families of subbundles and quotient bundles resp.\ are pulled back to $V_1$. This relative extension parameterizes all extensions of bundles $\cB_r$, $r\in S_{B_0}$ by bundles $\cA_s$,  $s\in S_{A_0}$. Let $\cA_{V_1} \hookrightarrow \cF$ be the associated family of short filtrations.

It will be needed that the  complex $\cP^\tbullet$ is defined over all of $S_{A_0}\times S_{B_0}$. This condition can be achieved beforehand: There exist only finitely many choices of isomorphism classes of $A_0$ and $B_0$ so that the space $U$ from Lemma~\ref{le:Psi} can be chosen to be valid for all such subbundles. Now $U$ also controls the parameter spaces $S_{A_0}$ and $S_{B_0}$. This fact implies that without loss of generality $\cP^\tbullet$ and hence its cohomology is defined on all of $S_{A_0}\times S_{B_0}$.

The crucial condition is related to the extension induced by $A \subsetneq E$ occurring as a fiber. This is guaranteed by the construction. If in a neighborhood all deformations of this trivial extension are also trivial, there is no need to act. If not, it is contained in the fiber of an irreducible component of $S_1={\rm supp }\, \cH^1(\cP^\tbullet)$. The further condition on $U$ is that all irreducible components of $S_1$ contain the origin. Also this condition can be satisfied beforehand for all finitely many previous choices. These arguments prove the following proposition.
\begin{proposition}\label{pr:sfilt}
  There exists a holomorphic family of short filtrations
  $$
  0\subsetneq \wt\cA \subsetneq \wt\cE
  $$
  over a linear fiber space $V\to R$, where $R$ is a closed subspace of $S_{A_0}\times S_{B_0}$ such that the fiber of the origin is
  $$
  0\subsetneq A_0 \subsetneq E_0
  $$
  and such that
  $$
  0 \subsetneq A \subsetneq E.
  $$
  corresponds to a point of $V$.
\end{proposition}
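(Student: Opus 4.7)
The plan is to extract the family of short filtrations by restricting the universal extension furnished by Proposition~\ref{pr:homext}(ii), applied to the pair $(\cB_{S_{A_0}\times S_{B_0}}, \cA_{S_{A_0}\times S_{B_0}})$, to a suitable closed analytic subspace of $S_{A_0}\times S_{B_0}$. By the preliminary choice of $U \subset H^{0,1}$ carried out in Section~\ref{se:cho}, the local complex $\cP^\tbullet$ computing relative $\mathrm{Ext}^1$ is defined on all of $S_{A_0}\times S_{B_0}$, and every irreducible component of $S_1 := \mathrm{supp}\,\cH^1(\cP^\tbullet)$ passes through the origin; these normalisations can be arranged once and for all because Proposition~\ref{pr:aut} and Lemma~\ref{le:iso} reduce the problem to finitely many isomorphism classes of slope-preserving subbundles of $E_0$.

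The main steps are then as follows. First I apply Proposition~\ref{pr:sigmanew} to $0 \subsetneq A \subsetneq E$ to obtain a compatible orthogonal splitting $E = A \oplus (E/A)$ together with $E_0 = A_0 \oplus B_0$; by Lemma~\ref{le:Psi} this identifies $A$ and $E/A$ with fibers $\cA_{s_A}$ and $\cB_{s_B}$ over some pair $(s_A, s_B) \in S_{A_0}\times S_{B_0}$. Next I let $R$ be the union of $\{0\}$ together with every irreducible component of $S_1$ that meets $(s_A, s_B)$, augmented if necessary by the isolated point $(s_A, s_B)$ itself in the degenerate case where the extension $A \subsetneq E$ is trivial and $(s_A, s_B) \notin S_1$. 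By the preliminary arrangement every included component of $S_1$ passes through the origin, so $R$ is a closed analytic subspace of $S_{A_0}\times S_{B_0}$ with $0$ as distinguished point. Set $V := V_1|_R \to R$, $\wt\cE := \cF|_V$ and $\wt\cA := \cA_{V_1}|_V$. The universal property of the relative extension sheaf identifies the fiber of $V$ at $0$ with $\mathrm{Ext}^1_X(B_0,A_0)$, whose zero element realises the split short filtration $0\subsetneq A_0 \subsetneq A_0\oplus B_0 = E_0$; similarly the extension class $[0 \to A \to E \to E/A \to 0]\in \mathrm{Ext}^1_X(E/A,A) = (V_1)_{(s_A,s_B)}$ yields a point of $V$ whose fibre in $\wt\cA \subsetneq \wt\cE$ is precisely the original filtration $0 \subsetneq A \subsetneq E$.

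The main obstacle is the consistent bookkeeping across all relevant isomorphism classes of $A_0 \subset E_0$: I need to arrange simultaneously that $\cP^\tbullet$ is globally defined on each product of parameter spaces, that every component of each relevant support $S_1$ passes through the origin, and that these conditions are enforced by a single contraction of the ball $U \subset H^{0,1}$ rather than being negotiated fiber by fiber. Finiteness of the admissible isomorphism classes (Proposition~\ref{pr:aut}) makes this achievable through finitely many successive shrinkings of $U$, as emphasised in Section~\ref{se:cho}; once these preparations are in place, the three clauses of the proposition follow directly from the universal property of $V_1$ and the compatibility of Lemma~\ref{le:Psi} with direct-sum decompositions.
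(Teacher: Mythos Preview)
Your proposal is correct and follows essentially the same route as the paper: apply Proposition~\ref{pr:sigmanew} and Lemma~\ref{le:Psi} to locate $(s_A,s_B)\in S_{A_0}\times S_{B_0}$, then take the universal extension of Proposition~\ref{pr:homext}(ii) over the linear space $V_1$, using the finiteness argument of Section~\ref{se:cho} to arrange beforehand that $\cP^\tbullet$ is defined on all of $S_{A_0}\times S_{B_0}$ and that every irreducible component of $S_1$ contains the origin. One minor remark: since $E$ is polystable and $A\subset E$ has the same slope, Proposition~\ref{pr:sigmanew} already gives a holomorphic splitting $E=A\oplus B$, so the extension $0\to A\to E\to E/A\to 0$ is \emph{always} trivial and your separate ``degenerate case'' augmentation by an isolated point is unnecessary---the relevant point of $V$ is simply the zero of the fibre over $(s_A,s_B)$.
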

Note that $V$ also parameterizes a deformation of $E_0$ so that the induced deformation $\wt\cE$ of $E_0$ over $V$ is a pull-back of the semi-universal family.

\subsection{General families of filtrations}
The following consequence of Proposition~\ref{pr:homext} will be needed.
\begin{proposition}\label{pr:conn}
  Let $\cF$ and $\cG$ be holomorphic families of vector bundles over $X\times R$ with projection $pr:X\times R \to R $. Let $V_0 \to R$ be the linear space associated to $\cH om(pr; \cF,\cG)$ in the sense of Proposition~\ref{pr:homext}(i) with universal homomorphism $\Phi:\cF_{V_0} \to \cG_{V_0}$. Then there exists the (possibly empty) complement $R'\subset R$ of an analytic set and a subspace $V'_0 \subset V_0| R' \to R'$ parameterizing all isomorphisms $\Phi|\cF_r:\cF_r \to \cG_r$ existing for $r\in R'$.
\end{proposition}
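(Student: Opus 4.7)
The plan is to define $V'_0$ as the open locus in $V_0$ where the universal homomorphism $\Phi : \cF_{V_0} \to \cG_{V_0}$ restricts to a fiberwise isomorphism of vector bundles on $X$, and then to identify $R'$ as its image in $R$. Consider first the analytic subset $Z \subset X \times V_0$ consisting of those $(x,v)$ where $\Phi_v$ fails to be an isomorphism at the point $x$. Locally over the component of $R$ where $\mathrm{rank}(\cF_r)=\mathrm{rank}(\cG_r)=n$ (a locally constant condition for vector bundles), $Z$ is cut out by the vanishing of $\det\Phi$, a section of the line bundle $(\det\cF_{V_0})^{-1}\otimes\det\cG_{V_0}$ on $X\times V_0$; on components where the ranks differ one has $Z=X\times V_0$ and the statement is vacuous. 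Since $X$ is compact, the projection $X\times V_0 \to V_0$ is proper, and Remmert's proper mapping theorem yields that $Z':=\mathrm{pr}_{V_0}(Z)$ is a closed analytic subset of $V_0$. Set $V'_0:=V_0\setminus Z'$, an open analytic subspace. For $v\in V'_0$ the map $\Phi_v$ is an isomorphism by construction, and conversely, since $V_{0,r}=\mathrm{Hom}(\cF_r,\cG_r)$ by Proposition~\ref{pr:homext}(i), every isomorphism $\phi:\cF_r\to\cG_r$ arises as $\Phi_v$ for a unique $v\in V_{0,r}$ which necessarily lies in $V'_0$.

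Next take $R':=\pi(V'_0)\subset R$, where $\pi:V_0\to R$ is the linear-space projection. Then $R\setminus R'=\{r\in R : V_{0,r}\subset Z'\}$ is precisely the locus of parameters with no isomorphism $\cF_r\cong\cG_r$. To verify that this complement is analytic one exploits the linear-space structure: each fiber $V_{0,r}$ is a vector space, hence irreducible, so $V_{0,r}\subset Z'$ is equivalent to $\dim Z'_r=\dim V_{0,r}$. Both fiber dimensions are upper semicontinuous functions of $r$ (the former by the general semicontinuity of fiber dimension for holomorphic maps of analytic spaces, the latter by the coherence of $\cH^0(\cP^\tbullet)$, whose fiber at $r$ is dual to $V_{0,r}$). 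Stratifying $R$ according to $\dim V_{0,r}$ produces locally closed analytic strata on which $V_0$ restricts to a genuine vector bundle; the $\C^*$-invariance of $Z'$ in fibers (since $\lambda\phi$ is an isomorphism iff $\phi$ is) permits projectivization $\mathbb{P}(V_0)$, a proper projective morphism onto the stratum, with the projectivization $\ol{Z'}$ a closed analytic subset. Within each stratum the condition $V_{0,r}\subset Z'$ becomes the analytic equality of the fiber dimensions of $\ol{Z'}$ and $\mathbb{P}(V_0)$, and assembly over the locally finite stratification exhibits $R\setminus R'$ as analytic in $R$.

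The main obstacle is precisely this last step: the analyticity of $R\setminus R'$ is not an immediate consequence of Remmert applied to $Z'$, because the projection $Z'\to R$ is not proper and its image need not be analytic. One must exploit the linear-space structure of $V_0$ together with upper semicontinuity of fiber dimension for both $V_0\to R$ and $Z'\to R$, and handle the possibility that $V_0$ is not locally free (cf.\ Remark~\ref{re:grau}) by stratifying first. With $R'$ so identified, $V'_0|_{R'}$ is the desired open subspace of $V_0|_{R'}$, whose fiber at each $r\in R'$ is the set of isomorphisms $\cF_r\to\cG_r$.
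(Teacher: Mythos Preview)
Your core argument is the same as the paper's: define the non-isomorphism locus as the vanishing of $\det\Phi$ on $X\times V_0$, project to $V_0$ using compactness of $X$ and Remmert's theorem, and take the complement to obtain $V'_0$. The paper's proof is three sentences and simply asserts that ``the complement of $pr(N)$ defines $R'$, and $V'_0$ is defined in the obvious way,'' without separating the two constructions or justifying why $R\setminus R'$ is analytic in $R$. You rightly flag this last point as the main obstacle and supply a stratification-plus-projectivization argument that the paper omits entirely.

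One caution on your final step: assembling analytic subsets of locally closed strata yields a priori only a constructible subset of $R$, and in fact the isomorphism locus $R'$ can fail to be open in general (take $\cF=\cO_X$ constant and $\cG$ a nontrivially varying family of degree-zero line bundles on an elliptic curve, so that $R'$ reduces to an isolated point). The paper does not address this either; fortunately, in its sole application (the gluing step in the proof of Proposition~\ref{pr:defilt}) only the parameter space $V'_0$ itself is used, and for that your argument is complete and correct.
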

\begin{proof}
  Unless the fibers of $\cF$ and $\cG$ are of the same dimension there is nothing to prove. Given $\Phi$ as above, there is a section $\det(\Phi)$ of a line bundle $ \det(\cF)^*\otimes \det(\cG)$ with zero set $N\subset X\times S$. The complement of $pr(N)$ defines $R'$, and $V'_0$ is defined in the obvious way.
\end{proof}

\label{se:genfil}
Again a polystable bundle $E_0$ is given together with a semi-universal deformation $\cE \to S \subset U$ following the construction summarized in Section~\ref{se:prl}.
Further choices will be made about the set $U\subset H^{0,1}(X,End(E_0))$ (see Sec.~\ref{se:cho}).

\begin{proposition}\label{pr:defilt}
There exists a semi-universal deformation of $E_0$ given by a family $\cE$ of semi-stable bundles on $X\times S$ with the following property.

Let $s\in S$ be a point such that $E=\cE_s$ is polystable and such that the corresponding element $\alpha\in H^{0,1}(X,End(E_0))$ satisfies $m(\alpha)=0$.  Then for any filtration of
\begin{equation}\label{eq:Es}
   0 \subsetneq E^1 \subsetneq \cdots\subsetneq E^k = E
\end{equation}
there exists a  space $W$ and a filtration
\begin{equation}\label{eq:EW}
  0 \subsetneq \cE^1 \subsetneq \cdots\subsetneq \cE^k
\end{equation}
over $X \times W$ defining a deformation of a certain induced filtration
\begin{equation}\label{eq:filtE0}
   0 \subsetneq E^1_0 \subsetneq \cdots\subsetneq E^k_0 = E_0
\end{equation}
of the polystable bundle $E_0$ together  with a map $W\to S$ such that $\cE^k$ is the pull-back of $\cE$ and \eqref{eq:Es} corresponds to a point $w\in W$ that is mapped to $s\in S$.
\end{proposition}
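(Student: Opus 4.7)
The plan is to proceed by induction on the length $k$ of the filtration. The case $k=1$ is immediate, taking $W=S$ and $\cE^1=\cE$, while $k=2$ is exactly Proposition~\ref{pr:sfilt}. For $k\ge 3$ I would regard the given filtration as a short filtration $0\subsetneq E^{k-1}\subsetneq E^k=E$ sitting on top of the length-$(k-1)$ sub-filtration $0\subsetneq E^1\subsetneq\cdots\subsetneq E^{k-1}$ of $E^{k-1}$, and combine Proposition~\ref{pr:sfilt} with the inductive hypothesis via a fibre product.

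First I apply Proposition~\ref{pr:sfilt} to the top step $0\subsetneq E^{k-1}\subsetneq E$. Together with Proposition~\ref{pr:sigmanew} this yields an orthogonal splitting $E_0=E^{k-1}_0\oplus B_0$ into polystable summands, a closed subspace $R\subset S_{E^{k-1}_0}\times S_{B_0}$, a linear fibre space $V\to R$, and a family of short filtrations $0\subsetneq\wt\cA\subsetneq\wt\cE$ over $X\times V$ whose fibre at a distinguished point $v\in V$ is $0\subsetneq E^{k-1}\subsetneq E$. By construction $\wt\cA$ is the pull-back of the semi-universal deformation $\cA\to S_{E^{k-1}_0}$ of $E^{k-1}_0$ along the projection $p_V:V\to S_{E^{k-1}_0}$, and by the note following Proposition~\ref{pr:sfilt}, $\wt\cE$ is the pull-back of $\cE\to S$ along some map $q:V\to S$.

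Next I verify that the inductive hypothesis applies to $E^{k-1}$. The projection $\sigma$ of Proposition~\ref{pr:sigmanew} commutes with $\alpha$, so $\alpha$ is block-diagonal for $E_0=E^{k-1}_0\oplus B_0$; its $E^{k-1}_0$-block $\alpha^{k-1}=p_V(v)$ then satisfies $m_{E^{k-1}_0}(\alpha^{k-1})=0$ by the block-diagonal structure of the moment map \eqref{eq:mom} together with Lemma~\ref{le:Psi}(iii), and $E^{k-1}$ is polystable by slope considerations. The inductive hypothesis therefore supplies a parameter space $W'$, a filtration $0\subsetneq\cE^1\subsetneq\cdots\subsetneq\cE^{k-1}$ over $X\times W'$ deforming an induced filtration $0\subsetneq E^1_0\subsetneq\cdots\subsetneq E^{k-1}_0$, a map $p_{W'}:W'\to S_{E^{k-1}_0}$ along which $\cA$ pulls back to $\cE^{k-1}$, and a point $w'\in W'$ with $p_{W'}(w')=\alpha^{k-1}$ realising the prescribed length-$(k-1)$ filtration. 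I would then form the fibre product $W:=W'\times_{S_{E^{k-1}_0}}V$ with its two projections $\pi_{W'}$, $\pi_V$; over $W$ the bundles $\pi_{W'}^{*}\cE^{k-1}$ and $\pi_V^{*}\wt\cA$ are canonically identified as the common pull-back of $\cA$, so concatenation of the two pulled-back filtrations produces the length-$k$ filtration $0\subsetneq\pi_{W'}^{*}\cE^1\subsetneq\cdots\subsetneq\pi_{W'}^{*}\cE^{k-1}\subsetneq\pi_V^{*}\wt\cE=:\cE^k$ over $X\times W$. Here $\cE^k$ is the pull-back of $\cE$ along $q\circ\pi_V:W\to S$; the fibre at $0\in W$ is the combined induced filtration $0\subsetneq E^1_0\subsetneq\cdots\subsetneq E^k_0=E_0$, and the fibre at $w:=(w',v)$ is the originally prescribed filtration.

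The main obstacle is the compatibility step: one must check that $\pi_{W'}^{*}\cE^{k-1}$ and $\pi_V^{*}\wt\cA$ are canonically identified over $W$, that the two images $p_{W'}(w')$ and $p_V(v)$ really agree in $S_{E^{k-1}_0}$ so that $w=(w',v)$ lies in the fibre product, and that the iterative passage through successive parameter spaces does not force a fresh shrinking of $U\subset H^{0,1}$ at each step. The first two points rely on universality of $\cA\to S_{E^{k-1}_0}$ together with the deliberate invocation of the inductive hypothesis at the specific representative $\alpha^{k-1}=p_V(v)$; the third is the point already settled in Section~\ref{se:cho}, since Proposition~\ref{pr:aut} leaves only finitely many isomorphism classes of subbundles of $E_0$ of the relevant slope (and correspondingly finitely many relative extension and homomorphism sheaves in play), so $U$ can be chosen once and for all before the recursion begins.
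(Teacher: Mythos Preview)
Your inductive scheme is correct and matches the paper's overall architecture: both split the filtration at degree $k-1$ into a short filtration handled by Proposition~\ref{pr:sfilt} and a length-$(k-1)$ filtration of $E^{k-1}$ handled by the inductive hypothesis (the moment-map condition for $\alpha^{k-1}$ being verified via Lemma~\ref{le:Psi}(iii), which the paper cites as ``(iv)'').

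The genuine difference is in the gluing step. You take the fibre product $W=W'\times_{S_{E^{k-1}_0}}V$, using that both $\cE^{k-1}$ over $W'$ and $\wt\cA$ over $V$ are literal pull-backs of the semi-universal family $\cA\to S_{E^{k-1}_0}$, so that over $W$ they coincide canonically. The paper instead pulls both filtrations back to the full product $R^{k-1}\times R^k$ and then passes to the space $W\to R^{k-1}\times R^k$ parameterising relative \emph{isomorphisms} between the two degree-$(k-1)$ families, invoking Proposition~\ref{pr:homext}(i) and Proposition~\ref{pr:conn}. Your route is leaner: it avoids Proposition~\ref{pr:conn} altogether and produces a smaller parameter space, at the cost of having to track explicitly that both maps to $S_{E^{k-1}_0}$ hit the same point $\alpha^{k-1}$ (which you do). The paper's route is more robust in that it does not rely on the top bundle of the inductive family being a \emph{literal} pull-back; it only needs fibrewise isomorphism, and hence also yields a potentially larger $W$ covering more of the target. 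For the purposes of the proposition either construction suffices.
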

Note that the construction depends upon the initial choice of a point $s\in S$ such that $\cE_s$ is polystable.
\begin{proof}
First the construction will be given. Given the bundle $E$ as in \eqref{eq:Es} an associated filtration \eqref{eq:filtE0} is constructed by induction over $k$.

For $k=1$ there is nothing to show. However, it will be needed later that the components of the coarse moduli space of stable bundles are complements of analytic sets in the respective components of the analytic GIT-space. This was shown in \cite[Remark~5]{b-s}. For the convenience of the reader the short argument is indicated. Let the component $S^{(\nu)}\subset S$ contain stable points, which are known to form an open subset in the classical topology. Denote by $A\subset S^{(\nu)}$ (in the notation of \cite{b-s}) the set of points where the automorphism group of the fiber is of dimension greater than one. Then $A$ is invariant under $G$ in the restricted sense, and $\wt A\san2 G \subset \wt S^{(\nu)}\san2 G$ is a closed nowhere dense analytic subset (cf.\ \cite[Thm.~2]{b-s}). For any non-stable point of $\wt S^{(\nu)}$ the closure of the $G$-orbit contains a polystable but not stable point, whose image in the analytic GIT-quotient is in  $\wt A\san2 G$. Taking the preimage of the GIT-quotient of $\wt A$, it can be seen that the locus of non-stable points in $S^{(\nu)}$ is analytic.

The case $k=2$ refers to short filtrations. Both given filtrations will be split at the degree $k-1$ into a filtration of length $k-1$ and a short filtration. The hypothesis on the moment map for $E^{k-1}$ is satisfied by Lemma~\ref{le:Psi}(iv). The results about short filtrations can be applied. There exists
$$
{{\wt\cE}}^{k-1} \subsetneq {{\wt\cE}}^k
$$
with a parameter space which parameterizes the families of short filtrations and is now called $R^k$. Also there is a map $R^k \to S$ such that $\wt \cE^k$ is the pull-back of the semi-universal family $\cE\to S$ with respect to the base change map $R^k\to S$. There exists a point $r\in R^k$, $r\mapsto s$  such that the filtration
$$
\cE^{k-1}_r \subsetneq \cE^k_r
$$
is isomorphic to
$$
E^{k-1} \subsetneq E^k
$$
and over the origin the filtration is
$$
E^{k-1}_0\subsetneq E^k_0.
$$
Now the induction hypothesis can be applied to the truncated filtration
$$
0 \subsetneq E^1 \subsetneq E^2 \subsetneq \cdots\subsetneq E^{k-1}.
$$
Note that a filtration
$$
0 \subsetneq \wt{\wt\cE}^1 \subsetneq \cdots\subsetneq \wt{\wt\cE}^{k-1}
$$
exists with a parameter space $R^{k-1}$. Again the filtrations \eqref{eq:Es} and \eqref{eq:filtE_0} occur as fibers of the family taken at $t\in R^{k-1}$ after  truncation at $k-1$.

Both filtrations can pulled back to $R^{k-1}\times R^k$. Then both families are  glued together over a space $W\to  R^{k-1}\times R^k$ that classifies relative isomorphisms in the sense of Proposition~\ref{pr:homext}(i). Now Proposition~\ref{pr:conn} is applied: namely for the two degree $k-1$ contributions the fibers at the point $(t,r)$ are isomorphic so that these need to occur as images of $W \to R^{k-1}\times R^k$. Concerning the ``size'' of the parameter spaces, refer to Section~\ref{se:cho}.
\end{proof}

The construction of Proposition~\ref{pr:defilt} can be read as the construction of a complete deformation of a certain filtration of $E_0$ such that a given filtration of $\cE_s$ occurs as a fiber.

It also contains the construction of a complete deformation of any filtration of $E_0$ (always slopes being kept fixed). The existence of a semi-universal deformation also follows (the only ambiguity arises from the deformation of single bundles -- the relative ext-sheaves give rise to representable functors). Given complete deformations ``formal deformation theory'' also yields the existence of a semi-universal deformation.

The proposition contains the fact that for any $s\in S$ with polystable fiber  a suitable filtration of the bundle $\cE_s$ is a fiber of a family of filtrations. Little can be said off-hand about the other parameters. Note that the length of a \jh\ filtration is in general only semi-continuous (cf.\ Lemma~\ref{le:semicont} below).

\begin{corollary}\label{co:irrcomp}
  Let $S^{(\nu)} \subset S$ be an irreducible component (at $0\in S$). Then there exists a reduced space  $R$ over $S$ with a {\em surjective} holomorphic map $R\to  S^{(\nu)}$ and a filtration of $\cE_R$ (with locally free subsequent quotients) such that for a certain open subset all fibers are \jh\ filtrations.
\end{corollary}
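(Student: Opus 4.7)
The plan is to build $R$ as a finite disjoint union of spaces supplied by Proposition~\ref{pr:defilt}, indexed by the isomorphism classes of length-$k_\nu$ filtrations $\tau$ of $E_0$ by subbundles of the same slope, where $k_\nu$ denotes the generic Jordan--H\"older length over $S^{(\nu)}$. This length is well-defined by the semi-continuity asserted in Lemma~\ref{le:semicont} together with the irreducibility of $S^{(\nu)}$, and the enumeration is finite by Proposition~\ref{pr:aut} combined with the decomposition $E_0 = \bigoplus V_j \otimes E_j$.

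For each such $\tau$ I would apply Proposition~\ref{pr:defilt} at the point $s = 0$ --- where $\cE_0 = E_0$ trivially satisfies the polystability and moment-map hypotheses --- with $\tau$ serving simultaneously as the given filtration of $\cE_0$ and as the induced filtration of $E_0$. This produces a reduced parameter space $W^\tau$, a filtration $0 \subsetneq \cE^1 \subsetneq \cdots \subsetneq \cE^{k_\nu}$ of the pull-back of $\cE$ to $X \times W^\tau$ whose fibre over the origin is $\tau$, and a morphism $W^\tau \to S$. Let $R^\tau$ be the preimage of $S^{(\nu)}$ in $W^\tau$ and set $R := \bigsqcup_\tau R^\tau$ with the induced holomorphic map to $S^{(\nu)}$ and the inherited filtration of $\cE_R$, which by construction has locally free subsequent quotients. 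The Jordan--H\"older property on an open subset is then immediate for those $\tau$ whose subquotients deform to stable ones --- a condition consistent with the minimality of $k_\nu$ --- by openness of stability applied to each family $\cE^j/\cE^{j-1}$ over a neighbourhood of the origin in $R^\tau$.

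The heart of the argument is surjectivity of $R \to S^{(\nu)}$. For $s \in S^{(\nu)}$ the bundle $\cE_s$ is semistable by Proposition~\ref{pr:destable} and admits a Jordan--H\"older filtration by subbundles of the same slope; coalescing if necessary to length $k_\nu$ (possible since the length is always at least $k_\nu$ by Lemma~\ref{le:semicont}) yields a length-$k_\nu$ filtration of $\cE_s$. Applying the successive specialisation of Proposition~\ref{pr:sigmanew} --- first passing, when $s$ is not already polystable, to the polystable representative in the closure of the $G$-orbit of $s$ --- exhibits this filtration as a deformation of one of the enumerated filtrations $\tau$ of $E_0$, and the semi-universal character built into Proposition~\ref{pr:defilt} then places the pair $(s,\,\text{filtration})$ in the image of $W^\tau$, hence $s$ in the image of $R^\tau$. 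The main obstacle I expect to encounter is precisely this passage between a semistable $s$ and the polystable representative of its $G$-orbit closure, together with the verification that the coalesced filtration of $\cE_s$ lifts through the specialisation machinery to recover a filtration of $E_0$ of one of the pre-enumerated types $\tau$, which is where I anticipate most of the technical work to reside.
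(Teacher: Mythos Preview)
Your overall plan---enumerate finitely many filtration types $\tau$ of $E_0$, build $W^\tau$ for each via the construction underlying Proposition~\ref{pr:defilt}, and take the disjoint union---matches how the paper deploys these spaces later (Section~\ref{se:proofmain} works with the finitely many types up to the $\Gamma$-action, cf.\ Lemma~\ref{le:iso}). The paper gives no explicit proof of the Corollary; it is presented as a direct consequence of the completeness remark following Proposition~\ref{pr:defilt}, and the sentence just before the Corollary (``Little can be said off-hand about the other parameters'') signals that the non-polystable locus is not addressed directly.

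That said, your surjectivity argument has a genuine gap, and the fix you propose does not close it. When $s$ is not polystable, passing to the polystable representative $s'$ in the orbit closure identifies a type $\tau$ for $s'$, but this does not place $s$ itself in the image of $W^\tau \to S$. Completeness of $W^\tau$ holds at the fibre over $0\in S$ (where the filtration is $\tau$); to invoke it for your coalesced filtration of $\cE_s$ you would need a holomorphic family of filtrations over a pointed base connecting that filtration to $\tau$. The passage $s \rightsquigarrow s' \rightsquigarrow 0$ does not furnish such a family: Jordan--H\"older filtrations do not extend holomorphically across the degeneration (this is the content of Lemma~\ref{le:semicont}), and even if they did, completeness only yields a map of germs at the base point, not an assertion that a specific distant point lands in $W^\tau$. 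The machinery of Proposition~\ref{pr:sigmanew} requires $E$ polystable, so it cannot be applied to $\cE_s$ directly to produce a compatible filtration of $E_0$.

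In the application the paper sidesteps this entirely: the Kempf--Ness theorem reduces covering $\wt S^{(\nu)}\san2 G$ to hitting only the polystable points with $m(\alpha)=0$, where Proposition~\ref{pr:defilt} applies verbatim. So the literal surjectivity onto all of $S^{(\nu)}$ asserted in the Corollary is never used downstream; what matters is that the finitely many spaces (in the splitting variant of Proposition~\ref{pr:polfib}) cover the polystable $m=0$ locus. If you insist on proving the Corollary as stated, you would need an independent argument that for arbitrary semistable $\cE_s$ each JH subquotient lies in the semi-universal family of some stable summand of $E_0$---something neither your proposal nor the paper spells out.
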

Splitting filtrations will be needed.
\begin{proposition}\label{pr:polfib}
  Let a holomorphic family of filtrations
  $$
  \cE^1 \subsetneq \cdots \subsetneq \cE^k
  $$
  over $X\times \ul S$ (with locally free subsequent quotients) be given. Let the bundle $\cE_0$ over $s=0\in \ul S$ be polystable. Then there exists a (universal) parameter space $T\to \ul S$ such that the pull-back of the family of filtrations splits.
\end{proposition}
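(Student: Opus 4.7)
The plan is to split the filtration one extension at a time from the top, using the representability of the relative Hom functor furnished by Proposition~\ref{pr:homext}(i). Consider the short exact sequence
$$
0 \to \cE^{k-1} \to \cE^k \to \cQ^{k-1} \to 0
$$
with $\cQ^{k-1} := \cE^k/\cE^{k-1}$ locally free by hypothesis, and hence flat over $\ul S$. I apply Proposition~\ref{pr:homext}(i) to $\cF = \cQ^{k-1}$, $\cG = \cE^k$ (shrinking $\ul S$ to a Stein neighborhood of $0$ if necessary) to obtain a linear space $V_0 \to \ul S$ carrying a universal relative homomorphism $\Phi\colon \cQ^{k-1}_{V_0} \to \cE^k_{V_0}$. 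Postcomposing with the given projection $\cE^k \to \cQ^{k-1}$ yields a relative endomorphism of $\cQ^{k-1}_{V_0}$, classified by a morphism from $V_0$ to the analogous linear space of relative endomorphisms of $\cQ^{k-1}$; the preimage $T_{k-1} \subset V_0$ of the identity section is then a closed analytic subspace that represents the functor sending a base change $Z \to \ul S$ to the set of splittings of the pulled-back extension over $X \times Z$. Non-emptiness of $T_{k-1}$ at $0$ follows from the polystability of $\cE_0$ together with Proposition~\ref{pr:sigmanew} (applied after choosing a Hermite-Einstein connection $d_0$ on $E_0$, so that $m(\alpha)=0$ at the reference point), which produces an orthogonal refinement $E_0 = A_0^{(1)}\oplus\cdots\oplus A_0^{(k)}$ of the filtration, so in particular the central extension splits.

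\textbf{Induction on $k$.} Pull the truncated filtration $\cE^1 \subsetneq \cdots \subsetneq \cE^{k-1}$ back to $X \times T_{k-1}$; its central fibre is the polystable bundle $A_0^{(1)}\oplus\cdots\oplus A_0^{(k-1)}$. By the induction hypothesis applied over $T_{k-1}$, there exists a universal $T \to T_{k-1}$ over which the pulled-back shorter filtration splits. Combined with the splitting of the top extension already present over $T_{k-1}$, this yields a splitting of the full filtration over $X \times T$. Universality of the composition $T \to T_{k-1} \to \ul S$ is formal: any $Z \to \ul S$ over which the filtration splits factors uniquely through $T_{k-1}$ via the top-step splitting, and then uniquely through $T$ by the induction hypothesis.

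\textbf{Main obstacle.} Relative Hom sheaves do not a priori commute with base change (cf.\ Remark~\ref{re:grau}), so one cannot naively form a ``fibrewise Hom'' space. The representation~\eqref{eq:rep} underlying Proposition~\ref{pr:homext}(i) bypasses this by producing an honest linear space whose points classify fibrewise homomorphisms; once this is in hand, the condition ``splits the given projection'' is closed and linear and the iteration is formal. A secondary bookkeeping point, handled once and for all by the choice in Section~\ref{se:cho}, is that only finitely many isomorphism classes of subbundles $\cE^j|_0 \subset E_0$ occur, so the ambient open set $U \subset H^{0,1}$ can be fixed up front and no further shrinking is required during the induction.
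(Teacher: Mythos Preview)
Your proof is correct and follows essentially the same approach as the paper: both arguments use Proposition~\ref{pr:homext}(i) to parameterize the relevant homomorphisms by a linear space over $\ul S$ and then cut out the splittings by a closed analytic condition. The only cosmetic difference is that the paper parameterizes retracts $\varphi_\ell\colon \cE^{\ell+1}\to\cE^\ell$ with $\varphi_\ell|_{\cE^\ell}=\mathrm{id}$ (and handles all $\ell$ in one pass), whereas you parameterize sections $\cQ^{k-1}\to\cE^k$ of the top quotient and descend inductively; these are dual descriptions of the same splitting data.
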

\begin{proof}
  The construction of Proposition~\ref{pr:homext} is applied to homomorphisms $\varphi_\ell : \cE^{\ell+1} \to \cE^\ell$. These are parameterized by a linear space over a closed analytic subspace of $S$. The condition $\varphi_k|\cE^\ell= id$ determines a closed analytic subset on the space of parameters.
\end{proof}

\subsection{Facts from local complex geometry}
Two basic classical facts about holomorphic maps $f: X \to Y$ of (pure dimensional) analytic sets and applications will be needed. Denote by $X_y$ the fiber $f^{-1}(y)$ of $y\in Y$. The topological rank $\rho(x)$ at a point $x\in X$ of such a map is defined by
$$
\rho(x) = \dim_x X - \dim_x X_{f(x)}.
$$
Always $\rho(x) \leq \dim Y$ holds, and openness of $f$ is equivalent to the topological rank being equal to $\dim Y$ everywhere.
{\it
\begin{itemize}
\item[(a)]
Provided $\rho(x)$ is constant, any $x\in X$ possesses a fundamental system $\{U\}$ of neighborhoods such that $f(U)\subset Y$ is analytic at $x$.
\item[(b)]
Given any fixed number $k$ the sets of points $X_k$ with $\rho(x)\leq k$ is analytic in $X$.
\end{itemize}
}
\begin{lemma}\label{le:locan}
  Let $f:X \to Y$ be a holomorphic map of reduced complex spaces with $X$ pure dimensional and $Y$ irreducible and locally irreducible.  Then for all $k$ there is a (locally) finite union
  $$
  f(X_k) = \bigcup_\nu Q^k_\nu
  $$
  where $ Q^k_\nu \subset W^k_\nu \subset Y$ are analytic subsets of (classically) open subsets  $W^k_\nu$ of $Y$. Such a set will be called {\em analytically constructible}.
\end{lemma}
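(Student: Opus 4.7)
My proof plan proceeds by combining facts (a) and (b) with an induction on dimension, reducing to irreducible analytic pieces on which the topological rank is constant. By fact (b), $X_k$ is an analytic subset of $X$; decompose it locally into its irreducible (hence pure-dimensional) components $\{Z_\alpha\}$, a locally finite family. Since the class of analytically constructible subsets of $Y$ is plainly closed under locally finite unions, it suffices to prove the following claim by induction on $d := \dim Z$: for every irreducible analytic subset $Z \subset X$, the image $f(Z) \subset Y$ is analytically constructible.

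The base case $d = 0$ is immediate, as $Z$ is discrete and $f(Z)$ is a discrete (hence trivially analytically constructible) subset of $Y$. For the inductive step, I apply fact (b) to the restricted holomorphic map $f|_Z : Z \to Y$ (whose source is a pure-dimensional reduced complex space) to produce analytic subsets $Z^{(j)} = \{z \in Z : \rho_{f|_Z}(z) \le j\}$ of $Z$. Let $D$ denote the maximal value of $\rho_{f|_Z}$ on $Z$; then $Z^{(D-1)}$ is a proper analytic subset, and by irreducibility of $Z$ each of its irreducible components has dimension strictly less than $d$. On the open dense complement $Z \setminus Z^{(D-1)}$ the rank is constantly $D$, so fact (a) applies pointwise: each such point $z$ has a fundamental system of neighborhoods $U \subset Z$ for which $f(U)$ is analytic in a suitable open $W \subset Y$ at $f(z)$. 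Selecting a covering of $Z \setminus Z^{(D-1)}$ by such $U_i$ that is locally finite on $Z$ (second countability suffices), the associated analytic pieces $Q_i := f(U_i) \subset W_i \subset Y$ exhibit $f(Z \setminus Z^{(D-1)})$ as a union of analytic subsets of open subsets of $Y$. The inductive hypothesis, applied to each irreducible component of $Z^{(D-1)}$, yields that $f(Z^{(D-1)})$ is analytically constructible, whence so is $f(Z) = f(Z \setminus Z^{(D-1)}) \cup f(Z^{(D-1)})$.

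The step I expect to be most delicate is promoting the local finiteness from $Z$ to $Y$: because $f$ is not assumed proper, the image of a locally finite family on $Z$ need not be locally finite on $Y$. To handle this I would localize on $Y$ from the outset. For any $y \in Y$, work in a small relatively compact open neighborhood $W \ni y$, and argue via the standard local structure of analytic sets that only finitely many of the pieces $Q_\nu$ produced in the construction meet $W$, so that they assemble into a finite union of analytic subsets of open subsets contained in $W$. The hypothesis that $Y$ is irreducible and locally irreducible is invoked here precisely to guarantee that ``$f(U)$ is analytic at $f(z)$'' has its expected meaning and that the local analytic pieces patch consistently in neighborhoods in $Y$.
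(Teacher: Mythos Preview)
Your argument is correct and rests on the same two ingredients the paper uses, namely the rank stratification from fact~(b) and the local analyticity of images on constant-rank loci from fact~(a). The paper's proof, however, is a single line: it applies (a) directly to the finitely many strata $X_\ell\setminus X_{\ell-1}$ for $\ell\le k$, on each of which $\rho$ is constant by construction. Your detour through an induction on $\dim Z$ over irreducible components of $X_k$ reproduces exactly this stratification one level at a time---at each inductive step your set $Z^{(D-1)}$ is nothing but the next rank level down---so the induction is not really needed once one observes that the rank filtration already terminates after at most $k+1$ steps. What your version buys is a slightly cleaner invocation of~(a), since each irreducible $Z$ is automatically pure-dimensional, whereas the strata $X_\ell\setminus X_{\ell-1}$ need not be; the paper is silent on this point. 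Your closing remark about local finiteness on $Y$ is well taken: neither the paper's one-line proof nor your argument fully pins this down without some properness or a further localisation, and you are right that this is where the hypothesis on $Y$ enters.
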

\begin{proof}
  Apply (a) to the union of $f(X_\ell\backslash X_{\ell-1})$.
\end{proof}

\section{Existence of the coarse moduli space}\label{se:proofmain}
In Sec.~\ref{se:wenorm} the transition to weakly normal spaces was discussed. Weak normalizations of reduced spaces will be taken, if necessary, i.e.\ all occurring analytic sets will be equipped with their weakly normal complex structure -- a process which does not change the underlying set and its topology. Technically speaking for holomorphic maps there exists a lift to the weak normalizations. The main theorem is the following:

\begin{theorem}\label{th:main}
Given a compact \ka\ manifold the classifying space $\wh\cM_{GIT}$ is a coarse moduli space for polystable vector bundles with distinguished topology in the category of weakly normal spaces.
\end{theorem}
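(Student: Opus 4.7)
The plan is to verify the three axioms of Definition~\ref{de:coarse} for $\cN=\wh\cM_{GIT}$ in the weakly normal category. Axiom~(i) is already Proposition~\ref{pr:normM}, and axiom~(ii) follows from the (stronger) classifying-space axiom verified there: any family $\cE$ of polystable bundles over a weakly normal $Z$ has a polystable fibre at every $z_0\in Z$, so the local construction in the proof of Proposition~\ref{pr:normM}(ii) — pull back the semi-universal deformation of $\cE_{z_0}$, lift to $\wh S$, and compose with $\wh S\to\wh{\wt S}\san2 G\subset\wh\cM_{GIT}$ — produces local holomorphic maps agreeing with the globally defined set-theoretic map $\varphi:Z\to\wh\cM_{GIT}$, and hence glue.

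Axiom~(iii) is the main content. Let $\cQ$ satisfy (i) and (ii); by hypothesis the induced set-theoretic bijection $\chi:\wh\cM_{GIT}\to\cQ$ is a homeomorphism, and one needs to show $\chi$ is holomorphic. The question is local, so I would work on a local GIT-chart $\wh{\wt S}\san2 G$ attached to a semi-universal deformation $S$ of some polystable $E_0$ with $G=Aut(E_0)$, and further on each irreducible component $S^{(\nu)}\subset S$. If $S^{(\nu)}$ contains stable points, the non-stable polystable locus in $\wt S^{(\nu)}\san2 G$ is a closed nowhere dense analytic subset by \cite[Remark~5]{b-s}, $\chi$ is holomorphic on the stable locus since both sides restrict there to the coarse moduli space of stable bundles with the given topology, and the Riemann removable singularity theorem together with weak normality of $\wh{\wt S^{(\nu)}}\san2 G$ extend $\chi$ across the non-stable locus — exactly as in the proof of the proposition preceding Section~\ref{se:prl}.

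The remaining case, in which an irreducible component $S^{(\nu)}$ consists entirely of non-stable polystable fibres, requires the families-of-filtrations machinery of Section~\ref{se:genfil}. By Corollary~\ref{co:irrcomp} there is a reduced space $R^{(\nu)}$ surjecting onto $S^{(\nu)}$ carrying a filtration of $\cE_{R^{(\nu)}}$ which, on some open $R^{(\nu)}_\circ\subset R^{(\nu)}$, is a \jh\ filtration. Proposition~\ref{pr:polfib} applied at the origin then yields a parameter space $T^{(\nu)}\to R^{(\nu)}$ over which the pulled-back filtration splits, and over the preimage $T^{(\nu)}_\circ$ of $R^{(\nu)}_\circ$ the fibres are direct sums of their stable \jh\ quotients, hence polystable. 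Axiom~(ii) for $\cQ$ applied to this family produces a holomorphic map $T^{(\nu)}_\circ\to\cQ$, which coincides with $\chi$ composed with the natural map $T^{(\nu)}_\circ\to\wh{\wt S^{(\nu)}}\san2 G$.

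The descent step — passing from holomorphicity on $T^{(\nu)}_\circ$ to holomorphicity of $\chi$ itself — I expect to be the main obstacle. The plan is to use Lemma~\ref{le:locan} and openness of the analytic GIT-quotient to identify the image of $T^{(\nu)}_\circ$ in $\wh{\wt S^{(\nu)}}\san2 G$ as the complement of an analytically constructible nowhere dense subset (the jump-locus of \jh\ length), so that $\chi$ is holomorphic on a dense open subset of the component; Riemann removability together with weak normality of $\wh{\wt S^{(\nu)}}\san2 G$ then extend $\chi$ across the complementary thin set. Gluing the resulting local holomorphicities across irreducible components and local GIT-charts gives (iii). The hard part is precisely ensuring that this ``good'' image is the complement of an \emph{analytic} (not merely constructible) nowhere dense set, so that weak normality applies uniformly even on components of $\cM_{GIT}$ where no stable bundles exist.
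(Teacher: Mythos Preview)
Your overall strategy is correct and close to the paper's: axioms (i) and (ii) are immediate, and for (iii) you work locally on GIT-charts and irreducible components $S^{(\nu)}$, produce auxiliary parameter spaces carrying families of polystable bundles, apply axiom~(ii) of $\cQ$ there, and extend by weak normality. Your case split (stable points present versus not) is not how the paper proceeds---it treats all components uniformly with the filtration machinery---but your Case~A argument is valid and is exactly the earlier proposition on $\ol{\wh\cM}$.

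The genuine gap is the one you yourself flag in Case~B, and the paper resolves it differently from your plan. You build a \emph{single} space $T^{(\nu)}$ from Corollary~\ref{co:irrcomp} followed by Proposition~\ref{pr:polfib}, and then hope that the image of $T^{(\nu)}_\circ$ in $\wh{\wt S^{(\nu)}}\san2 G$ is the complement of a thin analytic set. The paper does not attempt to prove this for one $T$. Instead it constructs, for \emph{each} polystable $s\in m^{-1}(0)\cap S^{(\nu)}$, a space $T_s$ via Proposition~\ref{pr:defilt} and Proposition~\ref{pr:polfib}, and then invokes the finiteness of isomorphism types of slope-preserving filtrations of $E_0$ (Proposition~\ref{pr:aut}, Lemma~\ref{le:iso}) to reduce to \emph{finitely many} such spaces $T$. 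By Kempf--Ness every point of $\wt S^{(\nu)}\san2 G$ is the image of some polystable $s$ with $m(\alpha)=0$, so the union of the images of these finitely many $T$'s covers the entire local GIT-quotient; this is the unnumbered Proposition immediately preceding the proof. With a finite cover in hand, Lemma~\ref{le:locan} guarantees that on a classically open dense subset of the quotient some $T$ has open image, and Riemann extension plus weak normality finishes. In short, the ``hard part'' you identify is not handled by sharpening the analysis of one $T$, but by producing finitely many whose images exhaust the quotient.

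There is a second ingredient you omit: Lemma~\ref{le:hol}. Knowing that $\wh\chi\colon T\to\cQ$ is holomorphic and that $T\to\wt S^{(\nu)}\san2 G$ is holomorphic does not by itself make $\wt\chi$ holomorphic on the image of $T$; you need a factorization statement for maps out of weakly normal spaces through an open surjection. The paper states and proves this lemma and uses it tacitly throughout the descent. Add the finite-covering argument and Lemma~\ref{le:hol} to your outline and the proof goes through.
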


\subsection{Outline of the proof}
Conditions (i) and (ii) of Definition~\ref{de:coarse} follow from the main theorem of \cite{b-s}. However, condition (iii) of the definition, namely the uniqueness of a coarse moduli is critical, because of the absence of a deformation  theory for polystable bundles. If $\cQ$ denotes another candidate, the issue is the holomorphicity of the natural map $\wt S\san2 G \to \cQ$. It is sufficient to consider an irreducible component $S^{(\nu)} \subset S$. In general such a component does not consist of polystable points so that even pointwise a map from $S^{(\nu)} \to \cQ$ is defined primarily via the analytic GIT-quotient $\wt S^{(\nu)}\san2 G$. As the length of a \jh\ filtration in general is not constant in a holomorphic family, filtrations are introduced that only have the \jh\ property generically. Degenerating parameters, where \jh\ filtrations are longer, cannot be avoided. In the next step such filtrations with a splitting are considered. It is shown that finitely many of those with parameter spaces $T$ are sufficient so that the images of the maps   $T\to S^{(\nu)}$ followed by the projection to $\wt S^{(\nu)}\san2 G$ cover the whole local analytic GIT-quotient. It is not clear how to see directly that a map to $\cQ$ exists, but on the complement of a thin subset there exist polystable fibers so that a pointwise definition of a map to $\cQ$ that is defined in terms of isomorphism classes of polystable points yields holomorphicity, and finally a holomorphic extension to all of the parameter spaces $T$ with values in $\cQ$ exists, which is sufficient for $\wt S\san2 G \to \cQ$ to be holomorphic.

\subsection{\jh\ filtrations}
In this section the results from \cite{bu-sch} cited in Proposition~\ref{pr:destable} will be crucial. A main property of the situation is that the lengths of \jh\ filtrations (being independent of the choice of such a filtration) need not be constant -- a fact that requires special attention.  Later the loci of constant length \jh\ filtrations will have to be considered separately.

\begin{lemma}\label{le:semicont}
  The length of a \jh\ filtration for a family $\cE$ over $S$ is upper semi-continuous: The set of parameters with length less than or equal to a certain number $k$ is open.
\end{lemma}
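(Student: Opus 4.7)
The plan is to reduce the question to upper semi-continuity of a natural function on the classifying space $\cM_{GIT}$ and then pull it back via the continuous classifying map $S \to \cM_{GIT}$. The starting observation is that the JH length $\ell(s)$ of $\cE_s$ depends only on the isomorphism class of the polystable representative $E^{ps}(s)$ in the closure of the $G$-orbit of $s$: indeed, it equals the number of stable summands of $\mathrm{gr}(\cE_s)\cong E^{ps}(s)$. Hence $\ell$ factors as $\ell=\bar\ell\circ\pi$, where $\pi: S\to\cM_{GIT}$ is the natural holomorphic classifying map $s\mapsto[\cE_s]$ and $\bar\ell:\cM_{GIT}\to\mathbb{Z}_{\geq 0}$ sends $[E]$ to the number of stable summands of $E$. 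Since the pullback of an upper semi-continuous function by a continuous map is upper semi-continuous, it suffices to establish that $\bar\ell$ is u.s.c.\ on $\cM_{GIT}$.

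To prove u.s.c.\ of $\bar\ell$ at a point $[E_0]\in\cM_{GIT}$, I would invoke a local chart $\wt S\san2 G$ of $\cM_{GIT}$ at $[E_0]$: the construction of Section~\ref{se:prl}, applied with $E_0$ as the reference bundle, produces a semi-universal deformation $\cE\to S$ such that $\wt S\san2 G$ is an open neighborhood of $[E_0]$ in $\cM_{GIT}$. Every point of this chart corresponds, via the Kempf-Ness theorem and \cite[Theorem~3]{bu-sch}, to the isomorphism class of a polystable fiber $\cE_\alpha$ for some polystable $\alpha\in S$ with $m(\alpha)=0$. Corollary~\ref{co:numsum} then asserts that the number of stable summands of such $\cE_\alpha$ is at most $\bar\ell([E_0])$. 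Hence $\bar\ell\leq\bar\ell([E_0])$ throughout this chart, giving u.s.c.\ of $\bar\ell$ at $[E_0]$. Since $[E_0]$ was arbitrary, $\bar\ell$ is u.s.c.\ on all of $\cM_{GIT}$, so $\ell=\bar\ell\circ\pi$ is u.s.c.\ on $S$---equivalently, $\{s\in S:\ell(s)\leq k\}$ is open for every $k$, as claimed.

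The main technical point---though not really an obstacle---is that Corollary~\ref{co:numsum} was phrased relative to a \emph{fixed} reference polystable bundle $E_0$; one must observe that the entire construction of Section~\ref{se:prl}, and hence the corollary, can be carried out centered at any polystable bundle, producing a local chart that covers a full neighborhood of the corresponding point in $\cM_{GIT}$. This ``recentering'' is exactly what allows Corollary~\ref{co:numsum} to apply uniformly and yields u.s.c.\ at every point of $\cM_{GIT}$, not merely at a distinguished one.
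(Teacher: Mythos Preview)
Your argument is correct and follows essentially the same route as the paper: reduce from an arbitrary fiber to its polystable representative (you phrase this as factoring through $\pi:S\to\cM_{GIT}$, the paper phrases it as ``the length of a \jh\ filtration is the same for all points in the closure of a $G$-orbit''), and then invoke Corollary~\ref{co:numsum} at each polystable point. Your presentation is in fact more explicit than the paper's about the recentering step needed to obtain upper semi-continuity at \emph{every} point rather than just at the origin, which the paper leaves implicit in the phrase ``Corollary~\ref{co:numsum} implies the claim.''
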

The lemma implies that in the situation of Corollary~\ref{co:irrcomp} the filtration can be taken as a \jh\ filtration (of minimal length within the family) over an open subset of $S^{(\nu)}$.
\begin{proof}[Proof of the Lemma]
The open set $U$ containing the parameter set $S$ was chosen as a ball (see Sec.~\ref{se:prl} and \cite[Sec.~1]{bu-sch}) such that the closure of any $G$-orbit taken in $S$ contains a polystable point (cf.\ the construction of an analytic GIT-quotient in \cite[Sec.~2]{b-s}, in particular also the Kempf-Ness theorem). As the length of a \jh\ filtration of a fiber is the same for all points in the closure of a $G$-orbit, it is sufficient to consider polystable points. Now Corollary~\ref{co:numsum} implies the claim.
\end{proof}
Recall (Prop.~\ref{pr:destable}) that all bundles $E$ close to $E_0$ are semi-stable and all subsheaves of $E$ of the same slope and with torsion-free quotient are subbundles,
a fact that can be applied to filtrations.
\begin{lemma}\label{le:sesh}
Let a filtration as above
  \begin{equation}\label{eq:sfi}
  E^1\subsetneq E^2 \subsetneq\cdots \subsetneq E^k
  \end{equation}
of locally free sheaves on $X$ be given such that all subsequent quotients are locally free with the same slope. Suppose that $k$ is the length of a \jh\ filtration of $E^k$.  Then \eqref{eq:sfi} is a \jh\ filtration.
\end{lemma}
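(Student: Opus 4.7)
The plan is to argue by contradiction: assume some successive quotient $Q^j := E^j/E^{j-1}$ of~\eqref{eq:sfi} fails to be stable. I will then produce a strict refinement of~\eqref{eq:sfi} of length $k+1$ whose successive quotients are still locally free and of the same slope $\mu := \mu(E^k)$. Since any filtration of $E^k$ by subsheaves of slope $\mu$ can be further refined until every successive quotient is stable (i.e.\ to a \jh\ filtration), and since all \jh\ filtrations of $E^k$ have the same length by Jordan--H\"older invariance, producing such a length-$(k{+}1)$ refinement will contradict the hypothesis that a \jh\ filtration of $E^k$ has length $k$.

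First I would verify that each $Q^j$ is semistable of slope $\mu$: by Proposition~\ref{pr:destable} the bundle $E^k$ is semistable; each $E^j \subset E^k$ has slope $\mu$ and is therefore itself semistable; and $Q^j$, a quotient of the semistable $E^j$ by the same-slope subsheaf $E^{j-1}$, is again semistable. Since $Q^j$ is not stable by assumption, choose a proper subsheaf $A' \subsetneq Q^j$ of slope $\mu$ with $Q^j/A'$ torsion-free (for instance the saturation of a destabilising subsheaf). Let $\widetilde A \subset E^j$ be the preimage of $A'$ under $E^j \twoheadrightarrow Q^j$, so that $E^{j-1} \subsetneq \widetilde A \subsetneq E^j$ and $\mu(\widetilde A) = \mu$.

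The key technical step will be to verify that the insertion of $\widetilde A$ leaves the refined filtration in the locally-free-with-locally-free-subquotients category. From the short exact sequence
\[
0 \to Q^j/A' \to E^k/\widetilde A \to E^k/E^j \to 0,
\]
whose outer terms are torsion-free, it follows that $E^k/\widetilde A$ is torsion-free, so Proposition~\ref{pr:destable} (applied to $\widetilde A \subset E^k$) upgrades $\widetilde A$ to an actual subbundle of $E^k$. In particular $E^k/\widetilde A$ is locally free; then $E^j/\widetilde A$, sitting inside the locally free $E^k/\widetilde A$ with locally free quotient $E^k/E^j$, is itself locally free; and finally $A' = \widetilde A/E^{j-1}$ is locally free as the kernel of the surjection $Q^j \to E^j/\widetilde A$ of locally free sheaves. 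The refined filtration therefore has length $k+1$ with all successive quotients locally free of slope $\mu$, which by the first paragraph yields the desired contradiction.

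The main obstacle I anticipate is precisely this upgrade from destabilising subsheaf to subbundle in the refined filtration, which is exactly what Proposition~\ref{pr:destable} is tailored to supply; everything else reduces to book-keeping with Jordan--H\"older theory and the semistability of subquotients of same slope on a compact \ka\ manifold.
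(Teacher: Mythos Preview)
Your argument is correct and is essentially the same as the paper's: refine the given filtration until all successive quotients are stable, then invoke the invariance of Jordan--H\"older length to conclude that no genuine refinement was possible. The paper compresses the refinement step into a single citation, whereas you spell out one step of it explicitly (including the local-freeness verification via Proposition~\ref{pr:destable}); the logical structure is the same, just phrased contrapositively.
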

\begin{proof}
The filtration \eqref{eq:sfi} can be refined so that the result is a \jh\ filtration with locally free graded objects using \cite[Prop.~4.4]{b-s}. Since the length of a \jh\ filtration is independent of the choice, the claim follows.
\end{proof}
Given $S\san2 G\subset \cM_{GIT}$, denote by $S^{(\nu)}\subset S$ the local irreducible components taken at $s_0=0$.

\subsection{Application to analytic GIT-quotients}
A component $S^{(\nu)}$ (or rather its extension $\wt S^{(\nu)}$ in the sense of \cite[Thm.~1]{b-s}) obviously gives rise to a local irreducible component of the analytic GIT-quotient.

Given any polystable point $s$ of $S^{(\nu)}$ contained in the zero set of the moment map, a holomorphic family of splitting filtrations parameterized by a space $T\to S^{(\nu)}$ was constructed in Proposition~\ref{pr:polfib}, which yielded a \jh\ filtration at the point $s$ and a filtration of $E_0$ with not necessarily stable subsequent quotients. The locus where not all of the respective quotients are stable can be identified by the dimension of the automorphism groups. Hence it is an analytic subset of $T$, which is nowhere dense, because stability is an open condition. Denote the complement by $T'\subset T$.

The Kempf-Ness theorem yields that $\wt S^{(\nu)}\san2 G$ is the image of the set of {\em polystable} points with vanishing moment map.

It follows from Lemma~\ref{le:iso} that only finitely many such types exist up to the action of $\Gamma$ resp. $G$. Hence only finitely many image spaces of spaces of the type $T$ are relevant in the GIT-quotient.

\begin{proposition}
The space $\wt S^{(\nu)}\san2 G$  is covered by finitely many images of spaces $T\to S^{(\nu)}$ which parameterize families of splitting filtrations. On the complements $T'\subset T$ of thin analytic subsets the corresponding filtrations consist of splitting filtrations of the \jh\ type.
\end{proposition}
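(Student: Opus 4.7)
The strategy is to synthesize the constructions of Sections~\ref{se:geca} and the discussion immediately preceding the statement, organizing them into four steps.

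\textbf{Step 1 (Construction at a single polystable point).} Fix a polystable point $s\in S^{(\nu)}$ whose associated $\alpha\in H^{0,1}$ satisfies $m(\alpha)=0$, and choose a \jh\ filtration of $E=\cE_s$. Applying Proposition~\ref{pr:defilt} produces a holomorphic family of filtrations
$$0\subsetneq \cE^1 \subsetneq \cdots \subsetneq \cE^k$$
over $X\times W_s$ together with a morphism $W_s\to S^{(\nu)}$ such that $\cE^k$ is the pull-back of the semi-universal family and such that the chosen \jh\ filtration of $\cE_s$ appears as a fiber. Applying Proposition~\ref{pr:polfib} next yields a parameter space $T_s\to W_s\to S^{(\nu)}$ over which the pulled-back family of filtrations splits as a direct sum.

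\textbf{Step 2 (Generic \jh\ property).} At the distinguished point $t_s\in T_s$ mapping to $s$, all subsequent quotients of the filtration are stable by construction. Since stability of the subsequent quotients is an open condition (detectable, as recalled just before the proposition, by the dimension of the automorphism groups of those quotients, which defines a closed analytic subset), its failure locus is a proper analytic subset of $T_s$. Let $T'_s$ denote its complement; on $T'_s$ the family is a splitting filtration of \jh\ type. By Lemma~\ref{le:semicont} the length $k$ is indeed the minimal \jh\ length attained on a dense open subset, so passing to $T'_s$ does not trivialise the construction.

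\textbf{Step 3 (Coverage).} The Kempf-Ness theorem (as formulated in \cite[Cor.~5.3]{bu-sch}) identifies $\wt S^{(\nu)}\san2 G$ with the image of the polystable locus inside $m^{-1}(0)$. Every point of the analytic GIT-quotient is therefore represented by some polystable $s$ as in Step~1, and the image of $T_s\to S^{(\nu)}\to \wt S^{(\nu)}\san2 G$ contains the class of $s$. Hence the totality of the $T_s$ covers the GIT-quotient.

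\textbf{Step 4 (Finiteness).} This is the main obstacle. Using the decomposition $E_0=\bigoplus_{j=1}^m V_j\otimes E_j$ from Section~\ref{su:defpol}, the associated filtration $0\subsetneq E_0^1\subsetneq\cdots\subsetneq E_0^k=E_0$ obtained in Step~1 is classified up to isomorphism by the tuple of dimensions of subspaces $W_j^\ell\subset V_j$, of which there are only finitely many. By Proposition~\ref{pr:aut} together with Lemma~\ref{le:iso}, two filtrations of $E_0$ of the same isomorphism type differ by a unitary element of $\Gamma\subset G$, and the induced parameter spaces differ by the conjugate action of the corresponding element of $G$ on $(S,0)$; hence they have identical image in the analytic GIT-quotient. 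Moreover, by the discussion in Section~\ref{se:cho}, the open set $U$ controlling all of the relative extension spaces can be shrunk once and for all so that the constructions are valid uniformly. Consequently finitely many spaces $T_{s_1},\ldots,T_{s_N}$ already cover $\wt S^{(\nu)}\san2 G$, and on each complement $T'_{s_i}$ the filtration is a splitting \jh\ filtration, completing the proof.

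The main difficulty lies in Step~4: the set of polystable points $s$ is uncountable, yet the parameter spaces $T_s$ constructed point by point must be organised into finitely many orbits under $G$. The proof rests on the combinatorial finiteness of filtration types of $E_0$ that the isotypic decomposition provides, together with the $G$-equivariance of the construction which ensures that conjugate parameter spaces descend to the same image in the GIT-quotient.
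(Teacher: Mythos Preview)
Your proof is correct and follows essentially the same route as the paper. The paper's argument is contained in the paragraphs immediately preceding the proposition rather than in a formal proof environment: construct $T$ at each polystable $s$ with $m(\alpha)=0$ via Propositions~\ref{pr:defilt} and \ref{pr:polfib}; cut out the non-stable-quotient locus to obtain $T'$; invoke Kempf--Ness for coverage; and appeal to Lemma~\ref{le:iso} for finiteness of the filtration types of $E_0$ modulo $\Gamma$, which is exactly your Step~4. Your proposal expands the finiteness step more explicitly through the isotypic decomposition and Proposition~\ref{pr:aut}, and makes the uniformity of the construction (Section~\ref{se:cho}) explicit, but the logic is the same. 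The remark in Step~2 invoking Lemma~\ref{le:semicont} is superfluous, since $T'_s$ is already non-empty because it contains the distinguished point $t_s$ by construction.
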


\begin{proof}[Proof of Theorem~\ref{th:main}]
The classifying space $\wh{\cM}_{GIT}$ for polystable vector bundles satisfies conditions (i) and (ii) from Definition~\ref{de:coarse} by the main theorem of \cite{b-s}. In order to prove (iii) pick another candidate $\cQ$ for a coarse moduli space. By (i) there exists a unique set-theoretic map $\chi: \wh{\cM}_{GIT}\to \cQ$, which is continuous by the additional assumption on the topology. The aim is to show that $\chi$ restricted to the  complement of a certain thin analytic set in $\wh{\cM}_{GIT}$ is holomorphic so that the Riemann extension theorem on the weakly normal space $\wh{\cM}_{GIT}$ yields holomorphicity of $\chi$.

In the sequel the statement of  Lemma~\ref{le:hol} below will be tacitly used. The holomorphic map $S \to \wt S\san2 G$ is surjective and the image of $S^{(\nu)}$ is an irreducible component. There is the following situation:
$$
\xymatrix{
T\ar[r]  \ar[drr]_{\wh \chi} & \wt S^{(\nu)}\san2 G \ar[dr]^{\wt\chi}\ar@{^{(}->}[r] & \wh \cM_{GIT}\ar[d]^\chi\\
&& \cQ
}
$$
It will be sufficient to show that for all $T$ all maps $\wt\chi$ are {\em holomorphic} so that a holomorphic extension to $\wt S^{(\nu)}\san2 G$ exists.

Note that the map $\wh\chi: T \to \cQ$ so far is only defined in terms of the analytic GIT-quotient. Over its restriction to $T'$ the fibers are \jh\ filtrations of polystable bundles. The restriction of $\wh\chi$ to $T'$ sends a point to the isomorphism class in $\cQ$ of the polystable bundle which is at the end of the filtration. As such this map from $T'$ to $\cQ$ is holomorphic. Because of the assumptions on the topology of a coarse moduli space and the weakly normal structure of all occurring reduced complex spaces, the map $\wt \chi$ is holomorphic on $T$.

Finally the holomorphicity of the map $\wt\chi$ has to be shown. The image of $T \to \wt S^{(\nu)}\san2 G$ is analytically constructible. As only finitely many choices of families $T$ are relevant, the image of all possible spaces $T$ has to contain classically open sets (see Lemma~\ref{le:locan}). On the union of those $\wt\chi$ is holomorphic. On the other hand the union of these is mapped onto all of the local analytic GIT-quotient. Hence locally $\wt\chi$ is known to be holomorphic on the complement of thin analytic subsets. Hence $\wt\chi$ is holomorphic on all of $\wt S^{(\nu)}\san2 G$.
\end{proof}

The following fact is stated in terms of notation that is not related to that in the rest of the paper.

\begin{lemma}\label{le:hol}
  Let $\varphi:X\to Y$, and $\psi:X\to Z$ be holomorphic maps of weakly normal complex spaces, and $\chi: Y \to Z$ be a set-theoretic map such that $\psi = \chi\circ\varphi$. Suppose that $\varphi$ is an open, surjective map. Then $\chi$ is holomorphic.
\end{lemma}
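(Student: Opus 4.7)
The plan is to prove continuity of $\chi$ first, and then its holomorphicity by testing against local holomorphic functions on $Z$ and exploiting weak normality of $Y$.

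For continuity, surjectivity of $\varphi$ gives, for any open $W\subseteq Z$,
\[
\chi^{-1}(W)=\varphi\bigl(\varphi^{-1}(\chi^{-1}(W))\bigr)=\varphi(\psi^{-1}(W)),
\]
which is open in $Y$ because $\psi^{-1}(W)$ is open by continuity of $\psi$ and $\varphi$ is an open map. To prove holomorphicity of $\chi$ it then suffices to show that for every local holomorphic function $f$ on $Z$, the composition $g:=f\circ\chi$ is holomorphic on its domain in $Y$. By construction $g$ is continuous and $g\circ\varphi=f\circ\psi$ is holomorphic on $X$; and by the definition of weak normality of $Y$ it is enough to verify that $g$ is holomorphic on the manifold locus $Y_{reg}$.

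The main step, which I expect to be the principal obstacle, is the construction of local holomorphic sections of $\varphi$ on a dense open subset of $Y$. I would pass to the open dense subset $X^*:=X_{reg}\cap\varphi^{-1}(Y_{reg})\subseteq X$ (dense because openness and surjectivity of $\varphi$ force $\varphi^{-1}(Y_{sing})$ to have empty interior in $X$), on which $\varphi$ is an open holomorphic map between complex manifolds. Working locally, openness together with surjectivity forces the maximal rank of $d\varphi|_{X^*}$ to equal $\dim Y$---otherwise the constant rank theorem would exhibit the image of a small neighbourhood as lying inside a proper analytic subvariety of $Y_{reg}$, contradicting openness. Hence the submersion locus $X^{**}\subseteq X^*$ is dense open, the image $U:=\varphi(X^{**})\subseteq Y$ is open by openness of $\varphi$, and its complement $Y\setminus U$ is nowhere dense. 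The implicit function theorem produces local holomorphic sections $s$ of $\varphi$ through every point of $X^{**}$, and on $U$ the identity $g=(f\circ\psi)\circ s$ exhibits $g$ as holomorphic there.

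It then remains to extend $g$ holomorphically across the closed set $Y_{reg}\setminus U$. This set is contained in the image of the critical locus of $\varphi|_{X^*}$, which by Sard's theorem has Lebesgue measure zero in $Y_{reg}$. A standard Morera-type removable singularity argument on the manifold $Y_{reg}$ (perturbing integration contours slightly to miss the measure-zero bad set and passing to the limit using continuity of $g$) then yields that $g$ is holomorphic on all of $Y_{reg}$. Weak normality of $Y$ finally delivers holomorphicity of $g$ on all of $Y$. Running this argument for every local holomorphic $f$ on $Z$ concludes that $\chi$ is holomorphic.
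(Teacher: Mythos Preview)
Your overall strategy coincides with the paper's: prove continuity of $\chi$ via $\chi^{-1}(W)=\varphi(\psi^{-1}(W))$, reduce holomorphicity to showing that a continuous $g$ on $Y$ with $g\circ\varphi$ holomorphic is itself holomorphic, produce local holomorphic sections of $\varphi$ on the submersion locus inside $X_{reg}\cap\varphi^{-1}(Y_{reg})$, and then extend across the remaining bad set using weak normality. The paper does exactly this.

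The difference, and the place where your argument has a genuine gap, is the final removable--singularity step. You invoke Sard's theorem, which only tells you that the set $Y_{reg}\setminus U$ has Lebesgue measure zero, and then sketch a ``Morera-type'' argument by ``perturbing integration contours slightly to miss the measure-zero bad set''. This does not work as stated: a closed set $E\subset\mathbb{C}^n$ of $2n$-dimensional measure zero cannot in general be avoided by nearby real-codimension-one contours. Fubini only guarantees that a generic translate of a contour meets $E$ in a set of zero length, not that it misses $E$ entirely; and Minkowski sums of measure-zero sets with segments can have positive measure. What one actually needs for a contour/Stokes argument is something like $\mathcal{H}^{2n-1}(E)=0$, which Sard does not give. (There is also a smaller oversight: $Y_{reg}\setminus U$ is not literally contained in the critical-value set of $\varphi|_{X^*}$, since it also contains points $y\in Y_{reg}$ whose fibre lies entirely in $X_{sing}$.)

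The paper avoids this difficulty by exploiting that we are in the holomorphic category: the singular locus and the non-maximal-rank locus are \emph{analytic} subsets of $X$, so the set in $Y$ over which no local section exists is, locally, contained in a proper analytic subvariety of $Y_{reg}$ (a ``thin analytic'' set in the paper's terminology). One then applies the Riemann extension theorem on the manifold $Y_{reg}$ rather than a measure-zero removability argument, and weak normality of $Y$ finishes. Replacing your Sard/Morera paragraph with this observation---that holomorphic rank conditions cut out analytic, hence real-codimension~$\geq 2$, loci---repairs the proof and brings it in line with the paper's.
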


\begin{proof}
Let $W\subset Z$ be an open subset. As $\varphi$ is surjective $\chi^{-1}(W)=\varphi(\psi^{-1}(W))\subset Y$ is open and $\chi$ is continuous.   Denote by $\wt \psi: \cO_Z(W) \to \cO_X(\psi^{-1}(W))$ the map defined by $h\mapsto g\circ \psi$ and $\wt \varphi : \cO_Y(V) \to \cO_X(\varphi^{-1}(V))$ the map given by $f \mapsto f\circ \varphi$ for open sets $W\subset Z$ and $V\subset Y$. The analogous map $\wt\chi$ takes $\cO_Z(W)$ to the set of continuous $\C$-valued functions $\mathrm{C}^0_Y(\chi^{-1}(W))$ on $\chi^{-1}(W)$. The map $\wt\varphi$ extends to a map $\mathrm{C}^0_Y(\chi^{-1}(W))\to \mathrm{C}^0_X(\psi^{-1}(W))$ which is injective because of the surjectivity of $\varphi$. The map $\varphi : X \to Y$ is restricted to regular loci, namely to  $\varphi^{-1}(Y_{reg})\cap X_{reg}$, and further to the locus of maximal rank. On the complement $Y'\subset Y$ of these  thin analytic sets there exist local analytic inverse maps $\eta$ in the sense that $\varphi\circ\eta=id_U$ for $U \subset Y'$ chosen suitably. The claim is that the inverse image of any holomorphic function on $\psi^{-1}(W)$ under the map $\wt\varphi: \mathrm{C}^0_Y(\chi^{-1}(W))\to \mathrm{C}^0_X(\psi^{-1}(W))$ is holomorphic: Let $f$ be continuous on $\psi^{-1}(W)\subset X$ such that $f\circ \varphi$ is holomorphic. Then on such an open set $U \subset \chi^{-1}(W)$ the function $ f\circ \varphi\circ \eta = f|U$ is again holomorphic. Hence $f$ is holomorphic outside a thin analytic subset, and by continuity and weak normality $f$ is holomorphic on all of $\chi^{-1}(W)$.
\end{proof}

\begin{corollary}
  Suppose that (for a given connected component) $\cM_{GIT}$ of the analytic GIT-space all deformations of polystable bundles are unobstructed. Then the normal space $\cM_{GIT}$ is a coarse moduli space with distinguished topology of polystable bundles.
\end{corollary}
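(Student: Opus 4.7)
The plan is to reduce the corollary directly to Theorem~\ref{th:main} by showing that, under the unobstructedness hypothesis, the space $\cM_{GIT}$ restricted to the given component is already normal (and in particular weakly normal), so that the weak normalization map $\wh\cM_{GIT}\to\cM_{GIT}$ is a biholomorphism over that component.

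First I would unfold the unobstructedness hypothesis through the Kuranishi setup of Section~\ref{se:prl}. For each polystable reference bundle $E_0$ lying in the given connected component, the local parameter space is the analytic subset $S=\Psi^{-1}(0)\subset U\subset H^{0,1}$ cut out by the holomorphic Kuranishi map $\Psi: U\to H^{0,2}$. The hypothesis that every deformation of every polystable bundle in the component is unobstructed means precisely that, after possibly shrinking $U$, the map $\Psi$ vanishes identically on a neighborhood of the origin; hence $S=U$ is an open ball in the complex vector space $H^{0,1}$, and in particular smooth. The slightly enlarged space $\wt S\supset S$ on which the restricted action of $G$ is defined in \cite[Sec.~2.1]{b-s} is then also smooth.

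Next I would invoke the analytic counterpart of the classical algebraic fact that the categorical GIT quotient of a smooth (or more generally normal) $G$-space by a reductive group is normal. Since the local model of $\wt S\san2 G$ in \cite{b-s} is built from the $G$-invariant holomorphic functions on $\wt S$, the same proof as in the algebraic setting (invariants of a reductive group acting on an integrally closed domain are integrally closed) shows that each local chart $\wt S\san2 G$ of $\cM_{GIT}$ along the given component is a normal complex space. In particular it is weakly normal, so the natural map $\wh\cM_{GIT}\to\cM_{GIT}$ restricts to a biholomorphism over this component. Theorem~\ref{th:main} then transfers verbatim to $\cM_{GIT}$, verifying conditions (i), (ii), and (iii) of Definition~\ref{de:coarse}.

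The main obstacle I anticipate is the careful verification that the analytic GIT quotient $\wt S\san2 G$ of a smooth $\wt S$ by the linear reductive action is normal: this requires matching the sheaf of germs of $G$-invariant holomorphic functions used in \cite{b-s} with the integral closure condition in the analytic category. Once this is in hand, everything else is a direct application of Theorem~\ref{th:main}, with no further deformation-theoretic input needed, since the normality guarantees that the weak normalization step in the statement of that theorem is vacuous on the given component.
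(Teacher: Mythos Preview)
Your proposal is correct and matches the paper's intended reasoning: the paper states the corollary without proof immediately after Theorem~\ref{th:main}, so the implicit argument is exactly the one you give---unobstructedness forces each local Kuranishi slice $S=U$ to be smooth, the analytic GIT-quotient of a smooth space by a linear reductive action is normal (hence weakly normal), and therefore $\wh\cM_{GIT}\to\cM_{GIT}$ is a biholomorphism on the component, reducing everything to Theorem~\ref{th:main}. The technical point you flag (normality of the analytic invariant quotient) is indeed the only place requiring care, and the paper evidently takes it as known from the construction in \cite{b-s}.
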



\begin{thebibliography}{Mu-Mm}

\frenchspacing

  \bibitem{an} Andreotti, Aldo; Norguet, François: La convexité holomorphe dans l’espace analytique des cycles d’une variété algébrique. Ann. Sc. Norm. Super. Pisa, Sci. Fis. Mat., III. Ser. {\bf 21}, 31--82 (1967).

  \bibitem{AHS} Atiyah, Michael; Hitchin, Nigel J.;   Singer, Isadore M.: Self-duality in four-dimensional Riemannian geometry. Proc.\ R.\ Soc.\ London.\ A {\bf 362}, 425--461 (1978).

  \bibitem{bps} Banica, Constantin; Putinar, Mihai; Schumacher, Georg: Variation der globalen Ext in Deformationen kompakter komplexer Räume. Math. Ann. {\bf 250}, 135--155 (1980).

  \bibitem{bu-sch}  Buchdahl, Nicholas;   Schumacher, Georg: Polystability and the Hitchin-Ko\-ba\-ya\-shi correspondence, arXiv:2002.03548.

  \bibitem{b-s} Buchdahl, Nicholas; Schumacher, Georg: An analytic application of geometric invariant theory. J. Geom. Phys., DOI:10.1016/ j.geomphys.2021.104237.

\bibitem{d-n} Dervan, Ruadha\'i; Naumann, Philipp: Moduli of polarised Manifolds via canonical Kähler metrics. arXiv:1810.02576v3.	

 \bibitem{d-k} Donaldson, Simon K.; Kronheimer, Peter, B.: The Geometry of Four-Manifolds. Oxford Mathematical Monographs. Oxford: Clarendon Press (1990).

\bibitem{fi} Fischer, Gerd: Complex analytic Geometry. Lecture Notes in Mathematics {\bf 538}, Springer-Verlag 1967.

\bibitem{gra} Grauert, Hans: Ein Theorem der analytischen Garbentheorie und die Modulräume komplexer Strukturen. Publications mathématiques de l’IHÉS, {\bf 5}, 5--64 (1960).

\bibitem{gro} Grothendieck, Alexander: Éléments de géométrie algébrique. III: Étude cohomologique des faisceaux cohérents. (Séconde partie). Publ. Math., Inst. Hautes Étud. Sci. {\bf 17}, 137--223 (1963).

  \end{thebibliography}
\end{document}